\newtheorem{lemma}{Lemma}[section]
\newtheorem{theorem}[lemma]{Theorem}
\newtheorem{thme}[]{Theorem}
\newtheorem{proposition}[lemma]{Proposition}
\newtheorem{corollary}[lemma]{Corollary}
\newtheorem{question}[lemma]{Question}
\theoremstyle{definition}
\newtheorem{definition}[lemma]{Definition}
\newtheorem{example}[lemma]{Example}
\newtheorem{remark}[lemma]{Remark}
\numberwithin{equation}{section}
\newcommand{\bdf}{\begin{definition}}
\newcommand{\edf}{\end{definition}}
\newcommand{\blem}{\begin{lemma}}
\newcommand{\elem}{\end{lemma}}
\newcommand{\bthm}{\begin{theorem}}
\newcommand{\ethm}{\end{theorem}}
\newcommand{\bpf}{\begin{proof}}
\newcommand{\epf}{\end{proof}}
\newcommand{\bprop}{\begin{proposition}}
\newcommand{\eprop}{\end{proposition}}
\newcommand{\bcor}{\begin{corollary}}
\newcommand{\ecor}{\end{corollary}}
\newcommand{\brem}{\begin{remark}}
\newcommand{\erem}{\end{remark}}
\newcommand{\bquest}{\begin{question}}
\newcommand{\equest}{\end{question}}
\newcommand{\bex}{\begin{example}}
\newcommand{\eex}{\end{example}}
\newcommand{\benu}{\begin{enumerate}\renewcommand{\labelenumi}{{\rm (\arabic{enumi})}}\renewcommand{\itemsep}{0pt}}
\newcommand{\eenu}{\end{enumerate}}
\newcommand{\N}{\mathbb{N}}
\newcommand{\R}{\mathbb{R}}
\newcommand{\C}{\mathbb{C}}
\newcommand{\bB}{\mathbb{B}}
\newcommand{\bK}{\mathbb{K}}
\def\cA{\mathcal{A}}
\def\cE{\mathcal{E}}
\def\cF{\mathcal{F}}
\def\cU{\mathcal{U}}
\def\tM{\widetilde{M}}
\DeclareMathOperator{\Ad}{Ad}
\DeclareMathOperator{\Aut}{Aut}
\DeclareMathOperator{\supp}{supp}
\DeclareMathOperator{\tr}{tr}
\newcommand{\stand}{(M, H, J, P)}
\newcommand{\mat}{\mathbb{M}_n}
\newcommand{\e}{\varepsilon}
\def\al{\alpha}
\def\hal{{\hat{\al}}}
\def\be{\beta}
\def\de{\delta}
\def\la{\lambda}
\def\vep{\varepsilon}
\def\ps{{\psi}}
\def\vph{\varphi}
\def\hvph{{\hat{\varphi}}}
\def\om{\omega}
\def\si{\sigma}
\def\ta{\tau}
\def\th{\theta}
\def\De{\Delta}
\def\La{\Lambda}
\def\Ph{\Phi}
\def\Th{\Theta}
\def\col{\colon}
\def\subs{\subset}
\def\ovl{\overline}
\def\oti{\otimes}
\def\rti{\rtimes}
\def\wdt{\widetilde}
\def\loti{\mathbin{\overline{\otimes}}}
\def\hoti{\mathbin{\widehat{\otimes}}}
\DeclareMathOperator{\id}{id}
\begin{document}

\title{Haagerup approximation property \\ for arbitrary von Neumann algebras}

\author[R. Okayasu]{Rui Okayasu$^1$}
\address{$^1$
Department of Mathematics Education, Osaka Kyoiku University,
Osaka \mbox{582-8582},
JAPAN}
\email{rui@cc.osaka-kyoiku.ac.jp}

\author[R. Tomatsu]{Reiji Tomatsu$^2$}
\address{$^2$
Department of Mathematics, Hokkaido University,
Hokkaido \mbox{060-0810},
JAPAN}
\email{tomatsu@math.sci.hokudai.ac.jp}

\subjclass[2010]{Primary 46L10; Secondary 22D05}
\thanks{The first author was partially supported by JSPS KAKENHI Grant Number 25800065. The second author was partially supported by JSPS KAKENHI Grant Number 24740095.}

\maketitle

\begin{abstract}
We attempt presenting a notion of the Haagerup approximation property for an arbitrary von Neumann algebra by using its standard form. 
We also prove the expected heredity results for this property. 
\end{abstract}

%%%%%%%%%%%%%%%%%%%%%%%%%%%%%%%%%%%%%%%%%%%%%%%%%%%%%%%%%%%%%%%%%%%%%%%%%%%%%%%%%%%%%%%%%%%%%%%%%%%%%%%%%%%%%%%%%%%%%%%%%%%%%%%%%%%%%%%%%%%%%%%%%%%%%%%%%%%%%%%%%%%%%%%%%%%%%%%%%%%%%%%%%%%%%%%%%%%%%%%%%%%%%%%%%%%%%%%%%%%%%%%%%%%%%%%%%%%%%

\section{Introduction}

%%%%%%%%%%%%%%%%%%%%%%%%%%%%%%%%%%%%%%%%%%%%%%%%%%%%%%%%%%%%%%%%%%%%%%%%%%%%%%%%%%%%%%%%%%%%%%%%%%%%%%%%%%%%%%%%%%%%%%%%%%%%%%%%%%%%%%%%%%%%%%%%%%%%%%%%%%%%%%%%%%%%%%%%%%%%%%%%%%%%%%%%%%%%%%%%%%%%%%%%%%%%%%%%%%%%%%%%%%%%%%%%%%%%%%%%%%%%

In the remarkable paper \cite{haa2}, 
U. Haagerup proves that 
the reduced $\mathrm{C}^*$-algebra 
of the non-amenable free group $F_d$ has Grothendieck's metric approximation property. 
He actually shows that 
there exists a sequence of normalized positive definite functions $\varphi_n$ 
on $F_d$ such that 
\begin{itemize}
\item[(a)] $\varphi_n(s)\to 1$ for every $s\in F_d$;
\item[(b)] $\varphi_n$ vanishes at infinity for every $n$. 
\end{itemize}
It is known 
that many classes of locally compact second countable groups 
possess such sequences, 
where pointwise convergence to $1$ is replaced by uniform convergence 
on compact subsets, 
and it is called the {\em Haagerup approximation property}. 
See the book \cite{book} for more details. 

In \cite{cho}, 
M. Choda observes that a countable discrete group $\Gamma$ 
has the Haagerup approximation property
if and only if 
its group von Neumann algebra $L(\Gamma)$ admits 
a sequence of normal contractive completely positive maps $\Phi_n$ 
on $L(\Gamma)$ such that 
\begin{itemize}
\item[(A)]
$\Phi_n\to\mathrm{id}_{L(\Gamma)}$
in the point-ultraweak topology;
\item[(B)] $\tau\circ\Phi_n\leq\tau$ and $\Phi_n$ extends to a compact operator $T_n$ on $\ell^2(\Gamma)$ for every $n$, which is given by
\[
T_n(x\xi_\tau)=\Phi_n(x)\xi_\tau
\ \mbox{for }x\in L(\Gamma),
\]
\end{itemize}
where $\tau$ denotes the canonical tracial state on $L(\Gamma)$.
After her work, 
many authors study the Haagerup approximation property, 
for example, F. Boca \cite{boc}, A. Connes and V. Jones \cite{cj}, 
P. Jolissaint \cite{jol} and S. Popa \cite{pop}. 
However it is defined only for a finite von Neumann algebra. 
In the case of a non-finite von Neumann algebra, 
it is a problem that how to describe {\em vanishing at infinity} 
in (b) or {\em compactness} in (B) for a completely positive map.

After the systematic study of one-parameter family 
of convex cones in the Hilbert space, 
on which a von Neumann algebra acts, 
with a distinguished cyclic and separating vector by H. Araki in \cite{ara}, 
and the independent work by Connes in \cite{co1}, 
Haagerup proves in \cite{haa1} that 
any von Neumann algebra is isomorphic to 
a von Neumann algebra $M$ on a Hilbert space $H$ 
such that there exists a conjugate-linear isometric involution $J$ on $H$ 
and a self-dual positive cone $P$ in $H$ with the following properties:
\begin{itemize}
\item[(i)] $J M J= M'$;
\item[(ii)] $J\xi=\xi$ for any $\xi\in P$;
\item[(iii)] $aJaJ P\subset P$ for any $a\in M$;
\item[(iv)] $JcJ=c^*$ for any $c\in \mathcal{Z}( M):= M\cap M'$.
\end{itemize}
Such a quadruple $\stand$ is called a standard form of the von Neumann algebra $M$. 

Let $\mat$ denote the $n\times n$ complex matrices. 
Then $M\otimes\mat$ operates in its standard form on $H\otimes\mat$ 
with the self-dual positive cone $P^{(n)}$, where $P^{(1)}=P$.  
The partial order on $H\otimes\mat$ induced by $P^{(n)}$ turns 
$H$ into the matrix ordered Hilbert space 
in the sense of M. D. Choi and E. G. Effros in \cite{ce}. 
Thus we will say that an operator $T$ on $H$ is {\em completely positive} 
if $(T\otimes\mathrm{id}_n) P^{(n)}\subset P^{(n)}$ for all $n\geq 1$. 
So for an arbitrary von Neumann algebra $M$,
we give the definition of the Haagerup approximation property 
if the identity of $H$ can be approximated 
in the strong operator topology 
by contractive completely positive compact operators. 

The Haagerup approximation property is also defined 
in other ways for a non-finite von Neumann algebra. 
One definition is the following: 
A $\sigma$-finite von Neumann algebra $M$ 
with a faithful normal state $\varphi$ is said to 
have the Haagerup approximation property for $\varphi$ 
if there exists a net of unital completely positive 
$\varphi$-preserving normal maps $\Phi_n$ on $M$ 
such that 
\begin{itemize}
\item[(A')]
$\Phi_n\to\mathrm{id}_M$ in the point-ultraweak topology;
\item[(B')]
The following implementing operators
$T_n$ on $H_\varphi$ are contractive and compact:
\[
T_n (x\xi_\varphi)=\Phi_n(x)\xi_\varphi\quad \text{for}\ x\in M.
\] 
\end{itemize}
However we wonder whether this definition sufficiently capture
the property of the corresponding compact operator $T_n$ in (B)
in the case where $M$ is finite.
%but under the certain condition,
%we show that the above definition implies our definition.  
More precisely, one of our main results is the following 
(Theorem \ref{sigma}):

\begin{thme}
Let $M$ be a $\sigma$-finite von Neumann algebra 
with a faithful normal state $\varphi$. 
Then $M$ has the Haagerup approximation property
if and only if 
there exists a net of normal contractive completely positive maps $\Phi_n$ on $M$ such that 
\begin{itemize}
\item[{\rm (A')}]
$\Phi_n\to\mathrm{id}_{M}$ in the point-ultraweak topology;
\item[{\rm (B'')}]
The following implementing operators $T_n$ on $H_\varphi$
are contractive and compact:
\[
T_n(\Delta_\varphi^{1/4}x\xi_\varphi)
=\Delta_\varphi^{1/4}\Phi_n(x)\xi_\varphi
\quad \text{for}\ x\in M.
\] 
\end{itemize}
\end{thme}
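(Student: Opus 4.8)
The plan is to prove the two implications separately, working throughout with the symmetric embedding $\eta\colon M\to H_\varphi$, $\eta(x)=\Delta_\varphi^{1/4}x\xi_\varphi$, whose range is dense and which satisfies $\eta(1)=\xi_\varphi$ and $P=\overline{\eta(M_+)}$ (and, at the matrix level, $P^{(m)}=\overline{\eta^{(m)}((M\oti\mathbb{M}_m)_+)}$ for the product state $\varphi\oti\tr$). Condition (B'') just says $T_n\circ\eta=\eta\circ\Phi_n$, so the whole statement amounts to showing that the completely positive compact contractions witnessing the Haagerup approximation property on $H_\varphi$ may be taken to be exactly the symmetric implementations of such maps $\Phi_n$.

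For the ``if'' direction, assume $\Phi_n$ as in (A'), (B''). First I would check that each given $T_n$ is completely positive: since $\Phi_n$ is completely positive, $\Phi_n\oti\id_m$ preserves $(M\oti\mathbb{M}_m)_+$, and under $\eta^{(m)}$ the amplified operator $T_n\oti\id_m$ acts as $\eta^{(m)}(X)\mapsto\eta^{(m)}((\Phi_n\oti\id_m)(X))$; hence $T_n\oti\id_m$ carries the dense subset $\eta^{(m)}((M\oti\mathbb{M}_m)_+)$ of $P^{(m)}$ into itself and, being bounded, preserves $P^{(m)}$. Next I would prove $T_n\to1$ in the weak operator topology. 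For $x,y\in M$ one rewrites, using $\Delta_\varphi^{1/2}a\xi_\varphi=Ja^*\xi_\varphi$ and $J\xi_\varphi=\xi_\varphi$,
\[
\langle T_n\eta(x),\eta(y)\rangle=\langle\Delta_\varphi^{1/2}\Phi_n(x)\xi_\varphi,y\xi_\varphi\rangle=\langle\Phi_n(x)\xi_\varphi,(JyJ)^*\xi_\varphi\rangle,
\]
which is a \emph{fixed} normal functional on $M$ evaluated at $\Phi_n(x)$; by (A') it converges to the same expression with $\Phi_n$ replaced by $\id$, that is, to $\langle\eta(x),\eta(y)\rangle$. As $\eta(M)$ is dense and $\|T_n\|\le1$, this gives $T_n\to1$ weakly, and then $\|T_n\xi-\xi\|^2\le2\|\xi\|^2-2\,\mathrm{Re}\langle T_n\xi,\xi\rangle\to0$ upgrades it to the strong operator topology. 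With compactness and contractivity granted by (B''), this exhibits the Haagerup approximation property.

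For the ``only if'' direction I would start from a net $S_i$ of completely positive compact contractions on $H_\varphi$ with $S_i\to1$ strongly, and try to realize each (approximately) as the symmetric implementation $T_\Phi$ of a normal completely positive map. Here lies the main obstacle: a completely positive operator $S$ on $H_\varphi$ need not implement any map on $M$ at all, because the compatibility condition $S\,\eta(M)\subseteq\eta(M)$ --- equivalently, that $S$ preserve the non-closed core $\Delta_\varphi^{1/4}M\xi_\varphi$ --- can fail, and only when it holds does the formula $\eta(\Phi(x)):=S\eta(x)$ define a map $\Phi\colon M\to M$ (whose normality, complete positivity, and the bound $\varphi\circ\Phi\le\varphi$ then follow from cone preservation and contractivity of $S$). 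I therefore expect the heart of the proof to be a \emph{smoothing} step: after the harmless symmetrization replacing $S_i$ by $\frac12(S_i+JS_iJ)$ (which keeps complete positivity, compactness, contractivity and strong convergence to $1$, since $J$ fixes $P$ pointwise), one averages over the modular automorphism group, replacing $S_i$ by Bochner integrals $\int\Delta_\varphi^{it}S_i\Delta_\varphi^{-it}\,d\mu(t)$ against suitable probability measures $\mu$. For fixed compact $S_i$ the map $t\mapsto\Delta_\varphi^{it}S_i\Delta_\varphi^{-it}$ is norm continuous, so such averages remain completely positive compact contractions converging strongly to $1$, while becoming increasingly compatible with the modular structure. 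The delicate point --- and the step I expect to be hardest --- is to gain enough modular compatibility to land in, or close enough to, the core $\Delta_\varphi^{1/4}M\xi_\varphi$ \emph{without} destroying compactness, since exact modular invariance, obtained as an ergodic mean, is generally not compact; this forces a two-parameter, diagonal limiting argument. Once maps $\Phi_i$ with $T_{\Phi_i}$ compact and contractive are produced, (B'') holds by construction, and (A') is recovered by running the weak-operator computation above in reverse: $T_{\Phi_i}\to1$ strongly forces $\langle\Phi_i(x)\xi_\varphi,(JyJ)^*\xi_\varphi\rangle\to\langle x\xi_\varphi,(JyJ)^*\xi_\varphi\rangle$, and since $\xi_\varphi$ is cyclic for $M'$ a density argument upgrades this, on the bounded net $\{\Phi_i\}$, to point-ultraweak convergence $\Phi_i\to\id_M$.
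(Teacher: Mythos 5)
Your ``if'' direction is correct and is essentially the paper's: complete positivity of $T_n$ follows from the intertwining with $\Phi_n\oti\id_m$ on the dense cone (this is the argument of Proposition \ref{makecp}), and the identity $\langle T_n\Delta_\varphi^{1/4}x\xi_\varphi,\Delta_\varphi^{1/4}y\xi_\varphi\rangle=\langle\Phi_n(x)\xi_\varphi,(J yJ)^*\xi_\varphi\rangle$ together with (A') and contractivity upgrades weak to strong convergence. The problem is the ``only if'' direction, where you correctly isolate the obstacle --- a c.p.\ compact contraction $S$ need not satisfy $S\,\Delta_\varphi^{1/4}M\xi_\varphi\subseteq\Delta_\varphi^{1/4}M\xi_\varphi$ --- but the remedy you propose does not address it. Averaging $S$ over the modular flow, even if one could reach exact commutation with all $\Delta_\varphi^{it}$ without losing compactness, still gives no control on whether $S\Delta_\varphi^{1/4}x\xi_\varphi$ lies in $\Delta_\varphi^{1/4}M\xi_\varphi$: by Connes' theorem the self-adjoint part of that subspace is $\bigcup_{c>0}\{\xi\in H_{\mathrm{sa}}\mid -c\xi_\varphi\le\xi\le c\xi_\varphi\}$, and a positive $S$ maps the order interval $[-c\xi_\varphi,c\xi_\varphi]$ into $[-cS\xi_\varphi,cS\xi_\varphi]$, which is contained in some $[-c'\xi_\varphi,c'\xi_\varphi]$ only under a domination $S\xi_\varphi\le\lambda\xi_\varphi$ that modular invariance does not provide. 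So the ``two-parameter diagonal limiting argument'' you defer to is not a gap in exposition but the missing idea itself.

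The paper's resolution is different and does not require $S$ to preserve the core at all. One first perturbs $S_n$ by a rank-one c.p.\ operator (Lemma \ref{cyclic}) so that $\xi_n:=S_n\xi_\varphi$ is again cyclic and separating; then positivity of $S_n$ gives $-c\xi_n\le S_n\Delta_\varphi^{1/4}x\xi_\varphi\le c\xi_n$ for $-c1\le x\le c1$, and Connes' order-isomorphism theorem (Lemma \ref{co}) applied \emph{to the moved vector} $\xi_n$ produces a normal u.c.p.\ map $\Phi_n=\Theta_{\xi_n}^{-1}\circ S_n\circ\Theta_{\xi_\varphi}$ with $S_n(\Delta_\varphi^{1/4}x\xi_\varphi)=\Delta_{\xi_n}^{1/4}\Phi_n(x)\xi_n$. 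The implementation is now with respect to the wrong vector on the right-hand side, and the second half of the argument corrects this: using the Powers--St{\o}rmer inequality and Sakai's Radon--Nikodym theorem one builds explicit c.c.p.\ maps $\Phi_n'$ with $\varphi\circ\Phi_n'\le\omega_{\xi_n}$, whose implementations $T_n'$ (via Proposition \ref{makecp}, i.e.\ Woronowicz's inequality) carry $\Delta_{\xi_n}^{1/4}\,\cdot\,\xi_n$ back to $\Delta_\varphi^{1/4}\,\cdot\,\xi_\varphi$; composing gives $\widetilde S_n=T_n'S_n$, which is still compact, and a convexity argument restores strong convergence. None of this machinery (change of cyclic vector, Connes' order interval characterization, the Radon--Nikodym corrections that also make $\Phi_n$ contractive while keeping $T_n$ compact) appears in your sketch, so the backward implication is not established.
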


In \cite{tor}, 
A. M. Torpe gives a characterization
of semidiscrete von Neumann algebras
in terms of matrix ordered Hilbert spaces. 
Namely a von Neumann algebra $M$
is semidiscrete
if and only if
the identity of the Hilbert space $H$ 
with respect to its standard form 
can be approximated in the strong operator topology 
by completely positive contractions of finite rank. 
A similar characterization of semidiscrete von Neumann algebras 
is also given by M. Junge, Z-J. Ruan and Q. Xu in \cite{jrx}
in terms of non-commutative $L^p$-spaces.
In particular, the non-commutative $L^2$-spaces become
standard forms,
and hence their result is a generalization of her characterization
of semidiscrete von Neumann algebras.
Therefore it immediately follows
that the injectivity
implies the Haagerup approximation property in our sense. 

The Haagerup approximation property has various stabilities.
Among them, we will prove the following result
(Theorem \ref{thm:norm1proj}):

\begin{thme}
Let $N\subs M$ be an inclusion of von Neumann algebras.
Suppose that there exists a norm one projection from $M$ onto $N$.
If $M$ has the Haagerup approximation property, then so does $N$.
\end{thme}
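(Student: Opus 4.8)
The plan is to realize the standard form of $N$ as a cone-compatible corner of the standard form of $M$ determined by the projection, and then to compress the approximating net from $M$ down to $N$. Fix standard forms $(M,H,J,P)$ and $(N,H_N,J_N,P_N)$. By Tomiyama's theorem the norm one projection $E\col M\ra N$ is a conditional expectation, so it is completely positive, idempotent and $N$-bimodular. First I would construct a linear isometry $v\col H_N\ra H$ implementing $E$ at the Hilbert space level, with the properties that $v^*v=1_{H_N}$, that $vJ_N=Jv$, and---crucially---that both $v$ and $v^*$ are completely positive for the matrix orders, i.e. $v\,(P_N^{(n)})\subseteq P^{(n)}$ and $v^*(P^{(n)})\subseteq P_N^{(n)}$ for every $n\geq1$. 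Since $E\otimes\id_n\col M\otimes\mat\ra N\otimes\mat$ is again a conditional expectation, the isometry at level $n$ is just $v\otimes\id_n$, so this single construction yields the complete positivity simultaneously for all $n$.

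Granting $v$, the compression argument is formal. Suppose $M$ has the Haagerup approximation property and let $(T_i)$ be a net of completely positive compact contractions on $H$ with $T_i\ra 1_H$ strongly. Put $S_i:=v^*T_iv$ on $H_N$. Each $S_i$ is a contraction, as a composition of contractions, and it is compact because $T_i$ is compact and $v,v^*$ are bounded. It is completely positive: for $\eta\in P_N^{(n)}$ one has $(v\otimes\id_n)\eta\in P^{(n)}$, hence $(T_i\otimes\id_n)(v\otimes\id_n)\eta\in P^{(n)}$, and finally $(v^*\otimes\id_n)$ carries this vector back into $P_N^{(n)}$. Lastly, for $\xi\in H_N$ the vector $v\xi$ is fixed, so $S_i\xi=v^*T_iv\xi\ra v^*v\xi=\xi$ by strong convergence of $T_i$ and continuity of $v^*$; thus $S_i\ra 1_{H_N}$ strongly. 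Therefore $(S_i)$ witnesses the Haagerup approximation property for $N$.

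The main obstacle is precisely the construction of $v$ for an arbitrary von Neumann algebra and an arbitrary norm one projection; everything else is bookkeeping. The natural candidate is the $L^2$-implementation of $E$: on the cone it should send the standard vector of $\ps|_N$ to that of $\ps\circ E$, while $v^*$ should be the standard-form extension of $E$ itself. Verifying that this prescription defines a genuine \emph{linear} isometry intertwining the modular conjugations and preserving the self-dual cones is where modular theory enters, and I expect the delicate points to be (i) the possible non-normality of $E$ and (ii) the absence of a distinguished state when $M$ is not $\sigma$-finite. I would handle these by first treating a faithful normal $E$ through Haagerup $L^2$-space theory (or, after restricting to $\sigma$-finite corners $eMe$, through the concrete $\Delta_\vph^{1/4}$-description afforded by Theorem \ref{sigma}), and then reducing the general case to this one by passing to suitable corners cut out by the support projection of $E$.
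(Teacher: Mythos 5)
Your compression skeleton ($S_i = v^*T_iv$ with $v$, $v^*$ completely positive) is exactly the shape of the paper's argument, and in the case of a \emph{normal} conditional expectation your plan works and is essentially the paper's Theorem \ref{thm:expectation}: there $E$ extends to the Jones projection on the GNS space of $\psi\circ E$, which is completely positive because it commutes with the modular operator, and one compresses. But the theorem as stated only assumes a norm one projection, with no normality, and here your proposal has a genuine gap: the single exact isometry $v$ you want to ``grant'' does not exist in general, and your proposed repair --- reducing to the normal case by cutting by the support projection of $E$ --- fails. A non-normal norm one projection (e.g.\ one built from an invariant mean or a singular state) has no corner on which it becomes normal; the support-projection trick only upgrades a normal expectation to a faithful normal one. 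Since no normal state on $M$ restricting to a prescribed state on $N$ is available, there is no candidate $L^2$-implementation of $E$ at all.

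The paper's actual route replaces your one exact $v$ by a \emph{net} of approximate versions. First it reduces, via the core $\tM=M\rtimes_{\sigma^\vph}\R$ (which inherits the HAP and admits a norm one projection onto $M$ by averaging the dual action) and the continuous decomposition of $N$, to the situation where $M$ is semifinite and $N$ is finite and $\sigma$-finite with traces satisfying $\ta_M=\ta_N\circ T$ for an operator valued weight $T$. Then the Connes--Anantharaman-Delaroche lemma (Lemma \ref{lem:A-D}) extracts from the mere existence of the norm one projection a net of \emph{normal} states $\vph$ on $M$ that are only approximately $N$-central and approximately restrict to $\ta_N$. From such a $\vph=\om_{\La_{\ta_M}(b)}$ one builds $R_b\colon H_{\ta_N}\to H_{\ta_M}$, $x\xi_{\ta_N}\mapsto\La_{\ta_M}(b^{1/2}xb^{1/2})$, which is completely positive and satisfies $R_b^*R_b\approx 1$ only weakly on a prescribed finite set, with $\|R_b\|\le(1+\vep)^{1/2}$ rather than $=1$. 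The operators $(1+\vep)^{-1}R_b^*T R_b$ then do the job. So the gap in your write-up is precisely the point where all the work lives: you need to abandon the exact intertwiner and quantify an approximate one, and the reduction to finite $N$ via cores is what makes the trace-based estimates (Powers--St{\o}rmer, spectral cutoffs of $T(b^2)$) available.
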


In \cite{CS},
M. Caspers and A. Skalski independently introduce the notion of
the Haagerup approximation property.
Our formulation actually coincides with theirs
because in either case,
the Haagerup approximation property is preserved
under taking the crossed products by $\R$-actions.
(See Remark \ref{rem:CS}.)

This paper is organized as follows: 
In Section 2, the basic notions are reviewed and 
we introduce the Haagerup approximation property for a von Neumann algebra. 
In Section 3, we study some permanence properties 
such as reduced von Neumann algebras, tensor products, 
the commutant and the direct sums. 
In Section 4, we consider the case where 
$M$ is a $\sigma$-finite von Neumann algebra 
with a faithful normal state $\varphi$. 
We present the proof of Theorem A.
We also discuss the free product of von Neumann algebras and examples.
In Section 5,
we study the crossed product of a von Neumann algebra
by a locally compact group.
We show that a von Neumann algebra
has the Haagerup approximation property 
if and only if so does its core von Neumann algebra.
The proof of Theorem B is presented.

\vspace{10pt}
\noindent
{\bf Acknowledgements.}
The authors are grateful
to Narutaka Ozawa for various useful comments on our work.
Theorem B is the answer to his question to us.
The first author would like to thank Marie Choda and Yoshikazu Katayama for fruitful discussions.
The authors also express their gratitude to the referees for several helpful comments and revisions.

%%%%%%%%%%%%%%%%%%%%%%%%%%%%%%%%%%%%%%%%%%%%%%%%%%%%%%%%%%%%%%%%%%%%%%%%%%%%%%%%%%%%%%%%%%%%%%%%%%%%%%%%%%%%%%%%%%%%%%%%%%%%%%%%%%%%%%%%%%%%%%%%%%%%%%%%%%%%%%%%%%%%%%%%%%%%%%%%%%%%%%%%%%%%%%%%%%%%%%%%%%%%%%%%%%%%%%%%%%%%%%%%%%%%%%%%%%%%%

\section{Definition}

%%%%%%%%%%%%%%%%%%%%%%%%%%%%%%%%%%%%%%%%%%%%%%%%%%%%%%%%%%%%%%%%%%%%%%%%%%%%%%%%%%%%%%%%%%%%%%%%%%%%%%%%%%%%%%%%%%%%%%%%%%%%%%%%%%%%%%%%%%%%%%%%%%%%%%%%%%%%%%%%%%%%%%%%%%%%%%%%%%%%%%%%%%%%%%%%%%%%%%%%%%%%%%%%%%%%%%%%%%%%%%%%%%%%%%%%%%%%%

We first fix notations and recall basic facts.
Let $M$ be a von Neumann algebra. 
We denote by $M_{\mathrm sa}$ and $M^+$, 
the set of all self-adjoint elements 
and all positive elements in $M$, respectively. 
We also denote by $M_*$ and $M_*^+$ 
the space of all normal linear functionals 
and all positive normal linear functionals on $M$, respectively.  

Let us recall the definition of a standard form
of a von Neumann algebra
that is formulated by Haagerup in \cite{haa1}.

\bdf
Let $\stand$ be a quadruple, 
where $M$ is a von Neumann algebra, 
$H$ is a Hilbert space on which $M$ acts, 
$J$ is a conjugate-linear isometry on $H$ with $J^2=1_H$, 
and $P\subset H$ is a closed convex cone which is self-dual, 
i.e., 
\[
P=\{\xi\in H \mid \langle\xi, \eta\rangle\geq 0
\quad \text{for}\ \eta\in P\}.
\]
Then $\stand$ is called a {\em standard form} 
if the following conditions are satisfied: 
\begin{itemize}
\item[(i)]
$J M J= M'$;
\item[(ii)]
$J\xi=\xi$ for any $\xi\in P$;
\item[(iii)]
$xJxJ P\subset P$ for any $x\in M$;
\item[(iv)]
$JcJ=c^*$ for any $c\in \mathcal{Z}(M):= M\cap M'$.
\end{itemize}
\edf

\brem
Recently, Ando and Haagerup prove in \cite[Lemma 3.19]{ah1} 
that the condition (iv) in the above definition actually can be dropped. 
\erem

By the work of Araki \cite{ara},
every functional $\varphi\in M_*^+$
is represented as $\varphi=\omega_{\xi_\varphi}$
by a unique vector $\xi_\varphi\in P$,
where
\[
\omega_{\xi_\varphi}(x)
=\langle x\xi_\varphi, \xi_\varphi\rangle
\quad \text{for}\ x\in M.
\]
Moreover the Araki--Powers--St\o rmer inequality holds:
\[
\|\xi_\varphi-\xi_\psi\|^2
\leq\|\varphi-\psi\|
\leq\|\xi_\varphi-\xi_\psi\|\|\xi_\varphi+\xi_\psi\|
\quad \text{for}\ \varphi, \psi\in M_*.
\]

A vector $\xi\in H$ is said to be {\em self-adjoint}
if $J\xi=\xi$.
We denote by $H_{\mathrm{sa}}$ the set of all self-adjoint vectors in $H$.
For $\xi, \eta\in H_{\mathrm{sa}}$, 
we will write $\xi\geq\eta$ 
if $\xi-\eta\in P$.
Note that for $\xi\in H_{\mathrm{sa}}$ there exist unique vectors $\xi_+,\xi_-\in P$
such that $\xi=\xi_+-\xi_-$ and $\langle \xi_+,\xi_-\rangle=0$.

We next introduce that a faithful normal semifinite (f.n.s.)\ weight gives a standard form. 
We refer readers
to the book of Takesaki \cite{t2}
for details. 

Let $\varphi$ be an f.n.s.\ weight on a von Neumann algebra $M$ and let 
\[
n_\varphi
:=\{x\in M \mid \varphi(x^*x)<\infty\}.
\]
Then $H_\varphi$ is the completion of $n_\varphi$ with respect to the norm 
\[
\|x\|_\varphi^2
:=\varphi(x^*x)\quad \text{for}\ x\in n_\vph.
\]
We write the canonical injection 
$\Lambda_\varphi\colon n_\varphi\to H_\varphi$. 

Then 
\[
\mathcal{A}_\varphi
:=\Lambda_\varphi(n_\varphi\cap n_\varphi^*)
\] 
is an achieved left Hilbert algebra with the multiplication 
\[
\Lambda_\varphi(x)\cdot\Lambda_\varphi(x)
:=\Lambda_\varphi(xy)\quad \text{for}\ x\in n_\varphi\cap n_\varphi^*
\]
and the involution 
\[
\Lambda_\varphi(x)^\sharp
:=\Lambda_\varphi(x^*)\quad \text{for}\ x\in n_\varphi\cap n_\varphi^*.
\]
Let $\pi_\varphi$ be the corresponding representation of $M$ on $H_\varphi$. 
We always identify $M$ with $\pi_\varphi(M)$. 

Let $S_\varphi$ be the closure of the conjugate-linear operator $\xi\mapsto\xi^\sharp$ on $H_\varphi$, 
which has the polar decomposition 
\[
S_\varphi
=J_\varphi\Delta_\varphi^{1/2},
\]
where $J_\varphi$ is the modular conjugation 
and $\Delta_\varphi$ is the modular operator. 
Then we have a self-dual positive cone
\[
P_\varphi
:=\overline{\{\xi(J_\varphi\xi) \mid \xi\in\mathcal{A}_\varphi\}}\subset H_\varphi.
\]
Note that $P_\vph$ is given by the closure of the set
of $\La_\vph(x\si_{i/2}^\vph(x)^*)$,
where $x\in \cA_\vph$ is entire
with respect
to the modular automorphism group $\si_t^\vph:=\Ad \De_\vph^{it}|_M$.

Therefore
the quadruple $(M, H_\varphi, J_\varphi, P_\varphi)$
is a standard form. 
A standard form is, in fact, unique
up to a spatial isomorphism,
and so it is independent to the choice
of an f.n.s.\ weight $\varphi$.

\bthm[{\cite[Theorem 2.3]{haa1}}]\label{unique}
Let $(M_1, H_1, J_1, P_1)$ and $(M_2, H_2, J_2, P_2)$ be two standard forms 
and let $\pi\colon M_1\to M_2$ be an isomorphism. 
Then there exists a unique unitary $u\colon H_1\to H_2$ such that
\benu
\item $\pi(x)=uxu^*$ for any $x\in M_1$;
\item $J_2=uJ_1u^*$;
\item $P_2=uP_1$.
\eenu
\ethm

Let us consider the $n\times n$ matrix algebra $\mat$ 
with the normalized trace $\mathrm{tr}_n$. 
If we define the inner product on $\mat$ by 
\[
\langle x, y\rangle
:=\mathrm{tr}_n(y^*x)
\quad \text{for}\ x, y\in\mat,
\]
then the algebra $\mat$ can be also regarded as a Hilbert space. 
Moreover $\mat$ is an achieved left Hilbert algebra 
such that the modular operator is the identity operator on $\mat$
and the modular conjugation is 
the canonical involution $J_{\mathrm{tr}_n}\colon x\mapsto x^*$. 
Hence the quadruple $(\mat, \mat, J_{\mathrm{tr}_n}, \mat^+)$ 
is a standard form.

Let $\stand$ be a standard form. 
Next we consider the von Neumann algebra $\mat(M):=M\otimes\mat$ 
on $\mat(H):=H\otimes\mat$. 
If we consider an f.n.s.\ weight $\varphi\otimes\mathrm{tr}_n$ 
on $M\otimes\mat$ 
for a fixed f.n.s.\ weight $\varphi$ on $M$, 
then we can give a standard form of $\mat(M)$ as mentioned before.  
However we give a standard form without using an f.n.s.\ weight. 
The following definition is given by Miura and Tomiyama in \cite{mt}.

\bdf[{\cite[Definition 2.1]{mt}}]
Let $\stand$ be a standard form and $n\in\N$. 
A matrix $[\xi_{i, j}]\in\mat(H)$ is said to be {\em positive} if 
\[
\sum_{i, j=1}^nx_iJx_jJ\xi_{i, j}\in P
\quad \text{for all }\ x_1, \dots, x_n\in M.
\]
We denote by $P^{(n)}$ the set of all positive matrices 
$[\xi_{i, j}]$ in $\mat(H)$. 
\edf

\bprop[{\cite[Proposition 2.4]{mt}}, {\cite[Lamma 1.1]{sw1}}]
Let $\stand$ be a standard form and $n\in\N$. 
Then $(\mat(M), \mat(H), J^{(n)}, P^{(n)})$ is a standard form, 
where $J^{(n)}:=J\otimes J_{\mathrm{tr}_n}$. 
\eprop

\bdf
Let $(M_1, H_1, J_1, P_1)$ and $(M_2, H_2, J_2, P_2)$ be two standard forms. 
We will say that a bounded linear (or conjugate-linear) operator 
$T\colon H_1\to H_2$ is {\em $n$-positive} 
if  
\[
T^{(n)}P_1^{(n)}\subset P_2^{(n)},
\]
where $T^{(n)}\colon \mat(H_1)\to \mat(H_2)$ is defined by 
\[
T^{(n)}([\xi_{i, j}]):=[T\xi_{i, j}].
\] 
Moreover we will say that $T$ is {\em completely positive}
({\em c.p.}) 
if $T$ is $n$-positive for any $n\in\N$,
\edf

We are now ready to give our definition of
the Haagerup approximation property for a von Neumann algebra.

\bdf\label{defn:HAP}
A W$^*$-algebra $M$
has the {\em Haagerup approximation property} (HAP)
if there exists a standard form $(M, H, J, P)$
and a net of contractive completely positive (c.c.p.)
compact operators $T_n$ on $H$ 
such that $T_n\to 1_{H}$ in the strong topology.
\edf

From this definition, it is clear that
if a von Neumann algebra $M_2$ is isomorphic to
$M_1$ which has the HAP,
then so does $M_1$.
Moreover it does not depend on the choice of a standard form.
Indeed,
let $(M_1, H_1, J_1, P_1)$ and $(M_2, H_2, J_2, P_2)$ be two standard forms
of von Neumann algebras, 
and $\pi\colon M_1\to M_2$ be an isomorphism.
By Theorem \ref{unique}, 
there is a unitary $u\colon H_1\to H_2$ 
such that 
$\pi(x)=uxu^*$ for $x\in M_1$,
$J_2=uJ_1u^*$,
and $P_2=uP_1$.
Let $T_n^1$ be a net of c.c.p.\ compact operators
on $H_1$ as in the previous definition.
Then one can easily check 
that $T_n^2:=uT_n^1 u^*$ gives a desired net 
of c.c.p.\ compact operators on $H_2$.
 
%
%\brem
%
%By Proposition \ref{well-defined}, 
%the definition of the HAP does not depend on the choice of a standard form 
%for a given von Neumann algebra $M$. 
%
%So we can simply say 
%that $M$ has the HAP.
%
%\erem
%

\brem
A notion of the HAP can be also defined for a matrix ordered Hilbert space 
in the sense of Choi and Effros in \cite{ce}. 
However
we only consider the case of a standard form
of a von Neumann algebra in this paper.
\erem

In \cite{tor},
Torpe gives a characterization of semidiscrete von Neumann algebras 
in terms of standard forms. 
In \cite{jrx}, 
Junge, Ruan and Xu also give a similar characterization 
of semidiscrete von Neumann algebras 
in terms of non-commutative $L^p$-spaces for $1\leq p<\infty$. 
In particular, in the case where $p=2$, 
the non-commutative $L^2$-space gives a standard form.
Hence their result is a generalization of her characterization.
As a corollary, 
the injectivity implies the HAP.

\bthm[{\cite[Theorem 2.1]{tor}, \cite[Theorem 3.2]{jrx}}]\label{tor}
Let $\stand$ be a standard form. 
Then the following are equivalent:
\benu
\item $M$ is semidiscrete;
\item There exists a net of c.c.p.\ finite rank operators $T_n$ on $H$ 
such that $T_n\to 1_H$ in the strong topology.
\eenu
\ethm

\bcor\label{cor:tor}
If a von Neumann algebra $M$ is injective, then $M$ has the HAP. 
\ecor

\brem
Unfortunately, Torpe's paper \cite{tor} is unpublished. 
However the implication (1) $\Rightarrow$ (2) is proved by L. M. Schmitt 
in \cite{sch} with her techniques. 
We also remark her proof of the other implication in Remark \ref{torperem}.
\erem

%%%%%%%%%%%%%%%%%%%%%%%%%%%%%%%%%%%%%%%%%%%%%%%%%%%%%%%%%%%%%%%%%%%%%%%%%%%%%%%%%%%%%%%%%%%%%%%%%%%%%%%%%%%%%%%%%%%%%%%%%%%%%%%%%%%%%%%%%%%%%%%%%%%%%%%%%%%%%%%%%%%%%%%%%%%%%%%%%%%%%%%%%%%%%%%%%%%%%%%%%%%%%%%%%%%%%%%%%%%%%%%%%%%%%%%%%%%%%

\section{Permanence properties}

%%%%%%%%%%%%%%%%%%%%%%%%%%%%%%%%%%%%%%%%%%%%%%%%%%%%%%%%%%%%%%%%%%%%%%%%%%%%%%%%%%%%%%%%%%%%%%%%%%%%%%%%%%%%%%%%%%%%%%%%%%%%%%%%%%%%%%%%%%%%%%%%%%%%%%%%%%%%%%%%%%%%%%%%%%%%%%%%%%%%%%%%%%%%%%%%%%%%%%%%%%%%%%%%%%%%%%%%%%%%%%%%%%%%%%%%%%%%%

In this section, 
we study various permanence properties of the Haagerup approximation property.

%%%%%%%%%%%%%%%%%%%%%%%%%%%%%%%%%%%%%%%%%%%%%%%%%%%%%%%%%%%%%%%%%%%%%%%%%%%%%%%

\subsection{Reduction}

%%%%%%%%%%%%%%%%%%%%%%%%%%%%%%%%%%%%%%%%%%%%%%%%%%%%%%%%%%%%%%%%%%%%%%%%%%%%%%%

We first recall the following result in \cite{haa1}.

\blem[{\cite[Corollary 2.5, Lemma 2.6]{haa1}}]\label{reducestand}
Let $\stand$ be a standard form of a von Neumann algebra
and $q$ a projection of the form $q=pJpJ$, 
where $p\in M$ is a projection.
\benu
\item The induction $pap\mapsto qxq$
is an isomorphism from $pMp$ onto $qMq$;
\item The quadruple $(qMq, qH, qJq, qP)$ is a standard form.
\eenu
\elem

Let $\stand$ be a standard form 
and $p\in M$ be a projection 
with $q:=pJpJ$. 
We write $M_q:=qMq$, $H_q:=qH$, $J_q:=qJq$ and $P_q:=qP$, respectively. 
On the one hand, we have a standard form 
\[
(\mat(M_q), \mat(H_q), J_q^{(n)}, P_q^{(n)}).
\]

Notice that $(\mat(M), \mat(H), J^{(n)}, P^{(n)})$ is a standard form. 
Set $p^{(n)}:=p\otimes 1_n\in\mat(M)$ 
and $q^{(n)}:=p^{(n)}J^{(n)}p^{(n)}J^{(n)}$. 
Then 
\[
q^{(n)}
:=p^{(n)}J^{(n)}p^{(n)}J^{(n)}
=q\otimes 1_n.
\] 
On the other hand, by Lemma \ref{reducestand}, we have a standard form
\[
(q^{(n)}\mat(M)q^{(n)}, q^{(n)}\mat(H), q^{(n)}J^{(n)}q^{(n)}, q^{(n)}P^{(n)}).
\] 
Note that $\mat(M_q)=q^{(n)}\mat(M)q^{(n)}$, $\mat(H_q)=q^{(n)}\mat(H)$ 
and $J_q^{(n)}=q^{(n)}J^{(n)}q^{(n)}$. 
Moreover two standard forms, in fact, coincide.

\blem\label{mat&proj}
In the above setting, $P_q^{(n)}=q^{(n)} P^{(n)}$.
\elem

\bpf
Let $[\xi_{i, j}]\in P^{(n)}$. 
For any $x_1, \dots, x_n\in  M$, we have
\[
\sum_{i, j=1}^n(qx_iq)(qJq)(qx_jq)(qJq)(q\xi_{i, j})
=q\sum_{i, j=1}^n(px_ip)J(px_jp)J\xi_{i, j}\in q P.
\]
Hence $[q\xi_{i, j}]\in P_q^{(n)}$. 
Therefore $q^{(n)} P^{(n)}\subset  P_q^{(n)}$. 

Next we will show that $P_q^{(n)}\subset q^{(n)} P^{(n)}$. 
Let $\xi\in  P_q^{(n)}$. 
Then $\omega_\xi\in \mat(M_q)_*^+$. 
Since $q^{(n)} P^{(n)}$ is a self-dual cone of a standard form
of $\mat(M_q)$,
there exists $\eta\in q^{(n)} P^{(n)}$ 
such that $\omega_\xi=\omega_\eta$ in $\mat(M_q)_*^+$. 
By the discussion above, we also have $\eta\in  P_q^{(n)}$. 
By the uniqueness of $\xi$, 
we have $\xi=\eta\in  q^{(n)} P^{(n)}$. 
Therefore $ P_q^{(n)}=q^{(n)} P^{(n)}$.
\epf

\blem\label{ad}
For $x\in M$, $xJxJ$ is a c.p.\ operator. 
\elem

\bpf
For $[\xi_{i, j}]\in P^{(n)}$, 
we have 
\[
[xJxJ\xi_{i, j}]
=(x\otimes 1_n)(J\otimes J_{\mathrm{tr}_n})(x\otimes 1_n)(J\otimes J_{\mathrm{tr}_n})[\xi_{i, j}]
\in P^{(n)}.
\]
\epf

\bthm\label{reduce}
Let $\stand$ be a standard form 
and $p\in M$ a projection 
with $q:=pJpJ$. 
If $M$ has the HAP, 
then so does $qMq$. 
In particular, $pMp$ also has the HAP. 
\ethm

\bpf
Since $M$ has the HAP, 
there exists a net of c.c.p.\ compact operators $T_n$ on $H$ 
such that $T_n\to 1_{H}$ in the strong topology. 
Then $S_n:=qT_n q$ gives a desired net for $qMq$ by Lemma \ref{ad}. 
By Lemma \ref{reducestand},
$pMp$ is isomorphic to $qMq$.
Hence $pMp$ also has the HAP.
\epf

\bprop\label{increasing}
Let $\stand$ be a standard form 
and $(p_n)$ an increasing net of projections of $M$
such that $p_n\to 1_H$ in the strong operator topology. 
If $p_n  M p_n$ has the HAP for all $n$, 
then so does $M$.
\eprop

\bpf
Let $q_n:=p_n Jp_n J$. 
By Lemma \ref{reducestand},
$q_n M q_n$ has the HAP for all $n$. 
Let $F$ be a finite subset of $H$ and $\e>0$. 
Since $q_n\to 1$ in the strong topology, 
there exists $n_F$ such that 
\[
\|q_{n_F}\xi-\xi\|<\e/2\quad \text{for}\ \xi\in F.
\] 
Since $q_{n_F} M q_{n_F}$ has the HAP, 
there exists a c.c.p.\ compact operator $T$ on $q_{n_F}H$ 
such that 
\[
\|T(q_{n_F}\xi)-q_{n_F}\xi\|<\e/2\quad \text{for}\ \xi\in F.
\] 

Now we define a c.c.p.\ compact operator $S:=Tq_{n_F}$ on $H$. 
Since
\[
\|S\xi-\xi\|
\leq\|T(q_{n_F}\xi)-q_{n_F}\xi\|+\|q_{n_F}\xi-\xi\|
<\e
\quad \text{for}\ \xi\in F.
\]
So $M$ has the HAP. 
\epf

%%%%%%%%%%%%%%%%%%%%%%%%%%%%%%%%%%%%%%%%%%%%%%%%%%%%%%%%%%%%%%%%%%%%%%%%%%%%%%%

\subsection{Norm one projection}

%%%%%%%%%%%%%%%%%%%%%%%%%%%%%%%%%%%%%%%%%%%%%%%%%%%%%%%%%%%%%%%%%%%%%%%%%%%%%%%

Secondly, we consider an inclusion of von Neumann algebras,
$N\subs M$
and study when $N$ inherits the HAP from $M$.
One answer will be presented
in Theorem \ref{thm:norm1proj},
which states that it is the case
when there exists a norm one projection from $M$
onto $N$.
In the following, let us prove this assuming
normality.

\bthm
\label{thm:expectation}
Let $N\subset M$ be an inclusion of von Neumann algebras.
Suppose that
there exists a normal conditional expectation from $M$ onto $N$.
If $M$ has the HAP, then so does $N$.
\ethm

\bpf
Let $\cE\col M\to N$ be a normal conditional expectation.
Take an increasing net of
$\sigma$-finite projections $p_n$ in $N$
such that $p_n\to 1$ in the strong topology.
Then we have a normal conditional expectation
$\cE_n\colon p_n M p_n\to p_n N p_n$, 
which is given by
$\cE_n(p_n xp_n)
:=\cE(p_nxp_n)=p_n\cE(x)p_n$
for
$x\in M$.
By Theorem \ref{reduce}, 
$p_n M p_n$ has the HAP. 
Thanks to Proposition \ref{increasing},
$N$ has the HAP if each $p_n N p_n$ does.
Hence we may and do assume that $N$ is $\si$-finite.

Suppose that $\cE$ is faithful.
Let $\ps$ be a faithful normal state on $N$.
Then $\varphi:=\psi\circ\cE\in M_*^+$ is also faithful. 
%
%Let $(\cA, H)$ and $(\cB, K)$ be the corresponding left Hilbert algebras 
%with the Hilbert spaces for $(N, \psi)$ and $(M, \varphi)$. 
%
%Then $\cB$ is a self-adjoint subalgebra of $\cA$. 
%
The projection $E$ from $H_\vph$
onto $K_\vph:=\ovl{N\xi_\vph}$, 
is given by
$E(x\xi_\varphi)=\cE(x)\xi_\varphi$
for $x\in M$.

Thanks to \cite[IX \textsection 4 Theorem 4.2]{t2}, 
the modular operator $\Delta_\varphi$ and $E$ commute.
Thus it turns out that $E\col H_\vph\to K_\vph$
is a c.p.\ operator,
where we regard $K_\vph$ as the GNS Hilbert space of $N$
with respect to $\ps$.
Moreover the inclusion operator $V\colon K_\vph\to H_\vph$
is also a c.p.\ operator.
Let $T_n$ be a net of c.c.p.\ compact operators
for $M$ such that
$T_n\to 1_H$ in the strong topology.
Then $ET_n V$ gives a net of c.c.p.\ compact operators 
such that $ET_n V\to 1_K$ in the strong topology,
that is, $N$ has the HAP. 

In the case that $\cE$ is not faithful,
there exists a projection $e\in M\cap N'$ 
such that the central support of $e$ in $N'$ is the identity 
and
$\{x\in M \mid \cE(x^*x)=0\}=M(1-e)$.
Moreover we obtain a faithful normal conditional expectation
$\cE'\colon  eMe\to Ne$, 
which is given by $\cE'(x)=\cE(x)e$ for $x\in eMe$. 
By Theorem \ref{reduce}, 
$eMe$ has the HAP, 
and so does $Ne$ by our discussion above. 
Let $x\in N$.
Then $\cE(xe)=x$, which implies that
$\cE$ is an isomorphism from $Ne$ onto $N$,
and $N$ has the HAP. 
\epf

%%%%%%%%%%%%%%%%%%%%%%%%%%%%%%%%%%%%%%%%%%%%%%%%%%%%%%%%%%%%%%%%%%%%%%%%%%%%%%%

\subsection{Tensor product and commutant}

%%%%%%%%%%%%%%%%%%%%%%%%%%%%%%%%%%%%%%%%%%%%%%%%%%%%%%%%%%%%%%%%%%%%%%%%%%%%%%%

Next we show the following theorem on tensor products.

\bthm\label{tensor}
Let $M_1$ and $M_2$ be von Neumann algebras.
Then $M_1$ and $M_2$ have the HAP
if and only if
so does $M_1\mathbin{\overline{\otimes}}M_2$. 
\ethm

To prove this,
we introduce several results 
from \cite{mt,sw1,sw2}. 
Let $(M_1, H_1, J_1, P_1)$
and $(M_2, H_2, J_2, P_2)$
be two standard forms of von Neumann algebras. 
For $\zeta\in H_1\otimes H_2$, 
we define a bounded conjugate-linear map $r(\zeta)\colon H_1\to H_2$ by
\[
r(\zeta)(\xi)
:=
(\xi^*\oti1)\zeta
\quad \text{for}\ \xi\in H_1.
\]

\bdf[{\cite[Definition 2.7]{mt}}]
For $n\in\N$, 
the set of all elements $\zeta\in H_1\otimes H_2$ 
such that $r(\zeta)$ is a c.p.\ map from $H_1$ to $H_2$ 
is denote by $P_1\hoti P_2$. 
\edf

\bthm[{\cite[Theorem 2.8]{mt}}, {\cite[Theorem 1]{sw2}}]
The cone $P_1\hoti P_2$ contains $P_1\otimes P_2$ 
and is the self-dual cone in $H_1\otimes H_2$ 
such that $(M_1\loti M_2, H_1\otimes H_2, J_1\otimes J_2, P_1\hoti P_2)$ is a standard form. 
\ethm

\bcor[{\cite[Corollary 2.9]{mt}}]\label{mt}
The cone $P_1\hoti P_2$ coincides with the closure of 
\[
\Big{\{}
\sum_{i, j=1}^n\xi_{i, j}\otimes\eta_{i, j} \mid n\in\N,
\
[\xi_{i, j}]\in P_1^{(n)},
\
[\eta_{i, j}]\in P_2^{(n)}
\Big{\}}.
\]
\ecor

Under the identification
$\mat(M_1\loti M_2)
=M_1\loti \mat(M_2)$ and $\mat(H_1\otimes H_2)=H_1\otimes \mat(H_2)$, 
the self-dual positive cone $P_1\hoti P_2^{(n)}$ gives a standard form of $\mat(M_1\loti M_2)$ by \cite[Corollary 2.3]{sw1}.

\blem\label{cptensor}
If $T_1$ and $T_2$ are c.p.\ operators on $H_1$ and $H_2$, respectively, 
then $T_1\otimes T_2$ is a c.p.\ operator on $H_1\otimes H_2$. 
\elem

\bpf
Since $T_1$ and $T_2$ are c.p.\ operators, 
it suffices to show that $T_1\otimes T_2$ is positive. 
Let $\zeta\in P_1\hoti P_2$. 
By Corollary \ref{mt}, we may assume that 
\[
\zeta
=\sum_{i, j=1}^n\xi_{i, j}\otimes\eta_{i, j},
\]
where $n\in\N$, $[\xi_{i, j}]\in P_1^{(n)}$, $[\eta_{i, j}]\in P_2^{(n)}$. 
Then 
\[
(T_1\otimes T_2)\zeta
=\sum_{i, j=1}^nT_1\xi_{i, j}\otimes T_2\eta_{i, j},
\]
which belongs to $P_1\hoti P_2$ by Corollary \ref{mt}.
\epf

\bpf[Proof of Theorem \ref{tensor}]
We show the ``only if'' part.
Since $M_i$ has the HAP, 
there exists a net of c.c.p.\ compact operators $T_n^{i}$ on $H_i$ 
such that $T_n^{i}\to 1_{H_i}$ in the strong topology for $i=1, 2$. 
Then by Lemma \ref{cptensor}, 
$T_n:=T_n^{1}\otimes T_n^{2}$ gives a desired net of c.c.p.\ compact operators on $H_1\otimes H_2$.
The ``if'' part follows from Theorem \ref{thm:expectation}
with slice maps by states.
\epf

The proof of the following theorem is inspired by \cite[Theorem 2.8]{ht}.

\bthm\label{commutant}
If $M$ has the HAP,
then $M'$ has the HAP.
\ethm

\bpf
%
%Let $\stand$ be a standard form.
%Let $N\subs B(K)$ be a von Neumann algebra.
%It is trivial that
%if $\{M,H\}$ and $\{N,K\}$ are spatially isomorphic,
%then $N$ has the HAP.
Since a representation of a von Neumann algebra
consists of an amplification,
an induction and a spatial isomorphism,
it suffices to prove the statement for
$N=M\oti 1_K$ or $Q=Mp'$ for a projection $p'\in M'$,
where $K$ denotes a Hilbert space.
Taking the commutants of these,
we obtain $N'= M'\loti\bB(K)$
or $Q'=p' M'p'$.
They have the HAP by Theorem \ref{reduce} and Theorem \ref{tensor}.
\epf
%
%
%\blem\label{z(p)=1}
%%
%Let $M$ be a von Neumann algebra 
%and $p\in M$ be a projection 
%with central support $z(p)=1$. 
%%
%Then $M$ is isomorphic to a reduced von Neumann algebra of $pMp \loti \bB(K)$ 
%for some Hilbert space $K$. 
%%
%\elem
%%
%
%%

\bcor\label{lem:central1}
Let $M$ be a von Neumann algebra 
and $p\in M$ be a projection 
with central support $1$ in $M$.
The von Neumann algebra $M$ has the HAP 
if and only if 
$p M p$ has the HAP.
In particular,
a factor $M$ has the HAP if and only if
a corner of $M$ has the HAP.
\ecor

\bpf
The ``only if'' part is nothing but
Theorem \ref{reduce}.
We will show the ``if'' part.
Suppose that $pMp$ has the HAP.
Then by Theorem \ref{commutant},
$(pMp)'=M'p$ has the HAP.
Since the central support of $p$ in $M'$ equals 1,
the induction $M'\ni x\mapsto xp\in M'p$
is an isomorphism.
Thus $M'$ has the HAP, and so does $M$
again by Theorem \ref{commutant}.
\epf

%%%%%%%%%%%%%%%%%%%%%%%%%%%%%%%%%%%%%%%%%%%%%%%%%%%%%%%%%%%%%%%%%%%%%%%%%%%%%%%

\subsection{Direct sum}

%%%%%%%%%%%%%%%%%%%%%%%%%%%%%%%%%%%%%%%%%%%%%%%%%%%%%%%%%%%%%%%%%%%%%%%%%%%%%%%

Finally, this section concludes 
by considering the direct sum of von Neumann algebras.

\bthm\label{sum}
Let $(M_i)_{i\in I}$ be a family of von Neumann algebras. 
Then $\bigoplus_{i\in I} M_i$ has the HAP 
if and only if 
$M_i$ has the HAP for all $i\in I$. 
\ethm

\bpf
We write $M:=\bigoplus_{i\in I} M_i$. 
If $M$ has the HAP, 
then $M_i$ has the HAP by Theorem \ref{reduce}. 

Conversely, let $(M_i, H_i, J_i, P_i)$ be a standard form 
for $i\in I$. 
We denote 
\[
H:=\bigoplus_{i\in I} H_i, 
J:=\bigoplus_{i\in I} J_i,  
P:=\bigoplus_{i\in I} P_i.
\]
Then $(M, H, J, P)$ is a standard form. 
Let $F$ be a subset of $I$, 
and $T_i$ be a c.c.p.\ compact operator on $H_i$ for $i\in I$. 
Then we define a c.c.p.\ compact operator $T_F$ on $H$ by
\[
T_F:=(\bigoplus_{i\in F}T_i)p_FJp_FJ,
\]
where $p_F$ is the projection of $M$ onto $\bigoplus_{i\in F} M_i$. 

Let $\e>0$ and $\xi^1, \dots, \xi^m\in H$. 
We denote $\xi^k=\bigoplus_{i\in I}\xi_i^k$ 
with $\xi_i^k\in H_i$ for $1\leq k\leq m$. 
Since $\|\xi^k\|^2=\sum_{i\in I}\|\xi_i^k\|^2<\infty$, 
there is a finite subset $F\subset I$ 
such that 
\[
\sum_{i\not\in F}\|\xi_i^k\|^2
<\frac{\, \e\, }{2}
\quad \text{for}\ 1\leq k\leq m.
\] 
For each $i\in F$, 
since $ M_i$ has the HAP, 
there exists a c.c.p.\ compact operator $T_i$ on $H_i$ 
such that 
\[
\|T_i\xi_i^k-\xi_i^k\|^2
<\frac{\e}{2|F|}
\quad \text{for}\ 1\leq k\leq m.
\] 
Then 
\[
\|T_F\xi^k-\xi^k\|^2
=\sum_{i\in F}\|T_i\xi_i^k-\xi_i^k\|^2
	+\sum_{i\not\in F}\|\xi_i^k\|^2
<\e.
\]
\epf

\bcor\label{surjection}
Let $\pi$ be a normal $*$-homomorphism from $M$
into $N$. 
Then $M$ has the HAP 
if and only if 
$\pi(M)$ and $\ker\pi$ have the HAP. 
\ecor

\bpf
Take a central projection $z\in M$ 
such that $\ker\pi=Mz$ and $M(1-z)$ is isomorphic to $\pi(M)$. 
Since $ M=Mz\oplus M(1-z)$,
the corollary follows from Theorem \ref{sum}.
\epf

%
%\bcor
%
%Let $\pi\colon  M\to\bB(H)$ be a normal representation. 
%
%If $M$ has the HAP, 
%then so does $\pi(M)$.
%
%\ecor
%

%
%\bpf
%
%It follows from Corollary \ref{surjection}.
%
%\epf
%

%\bcor
%Let $(\pi_\alpha)_{\alpha\in A}$ be a separating family of normal representations of $ M$. If $\pi_\alpha( M)$ has the HAP for all $\alpha\in A$, then $ M$ has the HAP. 
%\ecor

%\bpf
%Let $(p_\beta)_{\beta\in B}$ be a maximal family of mutually orthogonal central projections such that for any $\beta\in B$ there is $\alpha(\beta)\in A$ such that $\pi_{\alpha(\beta)}$ is an isomorphism on $ M p_\beta$. Then by maximality we have $1=\sum_{\beta\in B}p_\beta$, and thus $ M=\sum_{\beta\in B} M p_\beta$. Since $ M p_\beta$ is isomorphic to $\pi_{\alpha(\beta)}( M)\pi_{\alpha(\beta)}(p_\beta)$, by Corollary \ref{reduce2}, $ M p_\beta$ has the HAP. Thanks to Theorem \ref{sum}, $ M$ has the HAP.  
%\epf

%%%%%%%%%%%%%%%%%%%%%%%%%%%%%%%%%%%%%%%%%%%%%%%%%%%%%%%%%%%%%%%%%%%%%%%%%%%%%%%%%%%%%%%%%%%%%%%%%%%%%%%%%%%%%%%%%%%%%%%%%%%%%%%%%%%%%%%%%%%%%%%%%%%%%%%%%%%%%%%%%%%%%%%%%%%%%%%%%%%%%%%%%%%%%%%%%%%%%%%%%%%%%%%%%%%%%%%%%%%%%%%%%%%%%%%%%%%%%

\section{$\sigma$-finite von Neumann algebras}

%%%%%%%%%%%%%%%%%%%%%%%%%%%%%%%%%%%%%%%%%%%%%%%%%%%%%%%%%%%%%%%%%%%%%%%%%%%%%%%%%%%%%%%%%%%%%%%%%%%%%%%%%%%%%%%%%%%%%%%%%%%%%%%%%%%%%%%%%%%%%%%%%%%%%%%%%%%%%%%%%%%%%%%%%%%%%%%%%%%%%%%%%%%%%%%%%%%%%%%%%%%%%%%%%%%%%%%%%%%%%%%%%%%%%%%%%%%%%

Let $M$ be a $\sigma$-finite von Neumann algebra 
with a faithful state $\varphi\in M_*^+$. 
We denote by $(\pi_\varphi, H_\varphi, \xi_\varphi)$ 
the GNS construction of $(M, \varphi)$. 
We always identify $M$ with $\pi_\varphi(M)$.
We also denote by $\Delta_\varphi$ and $J_\varphi$ 
the modular operator and the modular conjugation, respectively. 
Denote by $P_\varphi$ the norm closure of the cone
$\Delta_\varphi^{1/4} M^+\xi_\varphi$ in $H_\varphi$.
Then $(M,H_\vph,J_\vph,P_\vph)$ is a standard form.

%%%%%%%%%%%%%%%%%%%%%%%%%%%%%%%%%%%%%%%%%%%%%%%%%%%%%%%%%%%%%%%%%%%%%%%%%%%%%%%

\subsection{Construction of completely positive maps}

%%%%%%%%%%%%%%%%%%%%%%%%%%%%%%%%%%%%%%%%%%%%%%%%%%%%%%%%%%%%%%%%%%%%%%%%%%%%%%%

Let $\stand$ be a standard form 
and $\xi_0\in P$ be a cyclic and separating vector.
Then we denote by $\De_{\xi_0}$ the associated modular operator.
Note that the associated
modular conjugation equals $J$ by \cite[Lemma 2.9]{haa1}.

\blem[cf.\ {\cite[Theorem 2.7]{co1}}, {\cite[Lemma 4.8]{ah2}}]\label{co}
Let $\stand$ be a standard form of a $\sigma$-finite von Neumann algebra $M$. 
Let $\xi_0\in P$ be a cyclic and separating vector. 
Then the map $\Theta_{\xi_0}\colon  M\to  H$, which is defined by 
\[
\Theta_{\xi_0}(x):=\Delta_{\xi_0}^{1/4}x\xi_0\quad \text{for}\ x\in M,
\] 
induces an order isomorphism 
between $\{x\in M_{\mathrm{sa}} \mid -c 1\leq x\leq c 1\}$ 
and $K_{\xi_0}:=\{\xi\in H_{\mathrm{sa}} \mid -c\xi_0\leq \xi\leq c\xi_0\}$ 
for each $c>0$.  
Moreover $\Theta_{\xi_0}$ is $\sigma(M, M_*)$ - $\sigma(H, H)$ continuous.
\elem

\bpf
The first part of the lemma is proved in \cite[Lemma 4.8]{ah2}. 
We need to show that $\Theta_{\xi_0}$ is $\sigma(M, M_*)$ - $\sigma(H, H)$ continuous. 
Since 
\[
\Delta_{\xi_0}^{1/4}x\xi_0
=(\Delta_{\xi_0}^{1/4}+\Delta_{\xi_0}^{-1/4})^{-1}(x\xi_0+J_{\xi_0}x^*\xi_0)
\]
and $(\Delta_{\xi_0}^{1/4}+\Delta_{\xi_0}^{-1/4})^{-1}$ is bounded, 
it follows that $\Theta_{\xi_0}$ is $\sigma(M, M_*)$ - $\sigma(H, H)$ continuous. 
\epf

\blem\label{cpex}
Let $\stand$ be a standard form and $\xi\in P$. Then 
\benu
\item A functional $f_\xi\colon H\to\C,
\zeta\mapsto\langle\zeta, \xi\rangle$, is a c.p.\ operator;
\item An operator $g_\xi\colon\C\to H, z\mapsto z\xi$,
is a c.p.\ operator.
\eenu
\elem

\bpf
(1) For $[\xi_{i, j}]\in P^{(n)}$, we have
\[
f_\xi^{(n)}([\xi_{i, j}])=[f_\xi(\xi_{i, j})]=[\langle \xi_{i, j}, \xi\rangle].
\]
This is a positive matrix.
Indeed, if $z_1,\dots,z_n\in\C$, then
\[
\sum_{i,j=1}^n
\langle \xi_{i, j}, \xi\rangle z_i \ovl{z_j}
=
\langle \sum_{i,j=1}^nz_i Jz_j J\xi_{i,j},\xi\rangle
\geq0.
\]

(2)
Let $[z_{i, j}]\in\mat^+$.
Take $[w_{i,j}]\in\mat$ so that
$z_{i,j}=\sum_{k=1}^nw_{i,k}\ovl{w_{j,k}}$.
Then $g_\xi^{(n)}([z_{i, j}])=[z_{i, j}\xi]$ belongs to $P^{(n)}$.
Indeed, for $x_1, \dots, x_n\in M$,
putting $y_k:=\sum_{i=1}^n x_i w_{i,k}$, we have
\[
\sum_{i,j=1}^n x_i Jx_j J z_{i, j}\xi
=
\sum_{k=1}^n y_kJy_k J \xi\in P.
\]
\epf

\blem\label{cyclic}
Suppose that 
there exists a net of 
c.p.\ operators $S_n$ on $H_\varphi$
such that $S_n\to 1_{H_\varphi}$ in the strong topology.
Then there exists a net of
c.p.\ operators $S'_n$ on $H_\varphi$ 
satisfying the following:
\benu
\item $S'_n\to 1_{H_\varphi}$ in the strong topology;
\item $S'_n-S_n$ has rank one for all $n$;
\item $\|S'_n-S_n\|\to 0$;
\item $S'_n\xi_\varphi$ is cyclic and separating for all $n$.
\eenu
In particular, if $M$ has the HAP, then
there exists a net of c.c.p.\ compact operators 
$T_n$ on $H_\varphi$ such that 
$T_n\to 1_{H_\varphi}$ 
in the strong topology
and $T_n\xi_\varphi$ 
is cyclic and separating for all $n$.
\elem

\bpf
Let $(S_n)$ be a net of c.p.\ operators 
on $H_\varphi$ 
such that $S_n\to 1_{H_\varphi}$ in the strong topology. 
Set $\eta_n:=S_n\xi_\varphi\in P_\varphi$. 
Then we define $\xi_n:=\eta_n+(\eta_n-\xi_\varphi)_-\in P_\varphi$. 
Since
\[
\xi_n-\xi_\varphi
=\eta_n+(\eta_n-\xi_\varphi)_--\xi_\varphi
=(\eta_n-\xi_\varphi)_+\in P_\varphi,
\]
we have $\xi_n\geq\xi_\varphi$. 
For any $\eta\in P_\varphi$, 
if $\langle \xi_n, \eta\rangle=0$, 
then $\langle \xi_\varphi, \eta\rangle=0$, 
and thus $\eta=0$. 
By \cite[Lemma 4.3]{co1}, 
$\xi_n$ is cyclic and separating. 

Now we define a bounded operator $S'_n$ on $H_\varphi$ by
\[
S'_n\xi
:=S_n\xi+\langle \xi, \xi_\varphi\rangle(\xi_n-\eta_n)
\quad\text{for}\
\xi\in H_\varphi.
\]
By Lemma \ref{cpex}, 
$S'_n$ is a c.p.\ operator. 
Note that $S'_n-S_n$ has rank one and  
\[
S'_n\xi_\varphi
=S_n\xi_\varphi+\langle \xi_\varphi, \xi_\varphi\rangle(\xi_n-\eta_n)
=\xi_n.
\]
Since 
\[
\|\xi_n-\eta_n\|
=\|(\eta_n-\xi_\varphi)_-\|\leq \|\eta_n-\xi_\varphi\|
=\|S_n\xi_\varphi-\xi_\varphi\|\to 0,
\]
we have $\|S_n'\xi-\xi\|\to 0$ for any $\xi\in H_\varphi$, 
and $\|S_n'-S_n\|\to 0$. 

If $M$ has the HAP, 
then we may assume that 
the above operators $S_n$ are compact with $\|S_n\|\leq 1$.
Let $\xi\in H_\varphi$ with $\|\xi\|=1$. 
Since $\|S_n\|\leq 1$, 
we obtain 
\[
0
\leq 1-\|S_n\|
\leq \|\xi\|-\|S_n\xi\|
\leq \|\xi-S_n\xi\|
\to 0.
\]
Namely $\|S_n\|\to 1$, and thus $\|S'_n\|\to 1$. 
Then $T_n:=\|S_n'\|^{-1}S'_n$ is a c.c.p.\ compact operator 
such that $T_n\to\mathrm{id}_{H_\varphi}$ in the strong topology, 
and $T_n\xi_\varphi$ is cyclic and separating.
\epf

\blem[cf.\ {\cite[Theorem 10]{ara}}]\label{ara}
Let $\stand$ be a standard form and 
$\xi_0\in P$ be cyclic and separating vector.
If $(\xi_n)$ is a net of cyclic and separating vectors in $P$
such that $\xi_n\to\xi_0$, 
then $f(\Delta_{\xi_n})\to f(\Delta_{\xi_0})$ in the strong topology 
for any $f\in C_0[0,\infty)$. 
In particular $(\Delta_{\xi_n}^{1/4}+\Delta_{\xi_n}^{-1/4})^{-1}\to(\Delta_{\xi_0}^{1/4}+\Delta_{\xi_0}^{-1/4})^{-1} $ in the strong topology.
\elem

\blem[cf.\ {\cite[Theorem 1.1]{w}}]\label{wo}
Let $\stand$ be a standard form
and $\xi_0\in P$ be a cyclic and separating vector.
Let $C>0$ 
and $s$ be a positive sesquilinear form on $M\times  M$ 
such that $s(x, y)\geq 0$ 
and $s(x, 1)\leq C\omega_{\xi_0}(x)$ 
for $x, y\in  M^+$. 
Then
\[
s(x, x)
\leq C\|\Delta_{\xi_0}^{1/4}x\xi_0\|^2
\quad \text{for}\ x\in  M.
\]
\elem

\blem\label{lem:etaab}
Let $\stand$ be a standard form
and $\eta_0\in P$ be a cyclic and separating vector.
Then for $x, y\in M^+$,
one has
\[
0\leq
\langle \De_{\eta_0}^{1/4}x\eta_0,\De_{\eta_0}^{1/4}y\eta_0\rangle
\leq \|y\|\langle x\eta_0, \eta_0\rangle.
\]
\elem
\bpf
Put $y':=JyJ\in M'$.
Then we have
\begin{align*}
\langle \De_{\eta_0}^{1/4}x\eta_0,\De_{\eta_0}^{1/4}b\eta\rangle
&=
\langle \De_{\eta_0}^{1/2}x\eta_0,y\eta_0\rangle
=
\langle Jy'\eta_0, J\De_{\eta_0}^{1/2}x\eta\rangle
\\
&=
\langle JyJ\eta_0, x\eta_0\rangle
=
\langle xy'\eta_0,\eta_0\rangle.
\end{align*}
Since $xy'$ is positive
and $xy'=x^{1/2}y'x^{1/2}\leq\|y'\|x=\|y\|x$,
we are done.
\epf

By applying the above lemmas, we can make a c.p.\ operator from a c.p.\ map.

\bprop\label{makecp}
Let $\stand$ be a standard form of a $\sigma$-finite von Neumann algebra $M$ 
with cyclic and separating vectors $\xi_0, \eta_0\in P$.
Let $\Phi$ be a c.p.\ map on $M$ 
such that $\omega_{\eta_0}\circ\Phi\leq C\omega_{\xi_0}$ 
for some $C>0$. 
Then there exists a c.p.\ operator $T$ on $H$ 
with $\|T\|\leq (C\|\Phi\|)^{1/2}$ 
such that 
\[
T(\Delta_{\xi_0}^{1/4}x\xi_0)
=\Delta_{\eta_0}^{1/4}\Phi(x)\eta_0
\quad \text{for}\ x\in M.
\]
\eprop

\bpf
We define a positive sesquilinear $s_\Phi$ on $M\times M$ by
\[
s_\Phi(x, y)
:=\langle \Delta_{\eta_0}^{1/4}\Phi(x)\eta_0, 
\Delta_{\eta_0}^{1/4}\Phi(y)\eta_0\rangle
\quad \text{for}\ x, y\in M.
\]
Note that the corresponding modular operators $\Delta_{\xi_0}$ and $\Delta_{\eta_0}$ may not coincide. 
However, by \cite[Lemma 2.9]{haa1},  
we have $P=P_{\xi_0}=P_{\eta_0}$ and $J=J_{\xi_0}=J_{\eta_0}$
because $\xi_0, \eta_0\in P$. 
Then one can easily check that
\[
s_\Phi(x, y)\geq 0
\quad \text{for}\ x, y\in M^+.
\]
Moreover for $x\in M^+$, by Lemma \ref{lem:etaab}, we have 
\begin{align*}
s_\Phi(x, 1)
&=\langle \Delta_{\eta_0}^{1/4}\Phi(x)\eta_0, 
\Delta_{\eta_0}^{1/4}\Phi(1)\eta_0\rangle \\
&\leq\|\Phi(1)\|\langle \Phi(x)\eta_0, \eta_0\rangle \\
&\leq C\|\Phi\|\omega_{\xi_0}(x).
\end{align*}
By Lemma \ref{wo}, we obtain
\[
s_\Phi(x, x)
=\|\Delta_{\eta_0}^{1/4}\Phi(x)\eta_0\|^2
\leq C\|\Phi\|\|\Delta_{\xi_0}^{1/4}x\xi_0\|^2
\quad \text{for}\ x\in M.
\]
Hence there exists a bounded operator $T$ on $H$ 
with $\|T\|\leq (C\|\Phi\|)^{1/2}$, 
which is defined by 
\[
T(\Delta_{\xi_0}^{1/4}x\xi_0)
=\Delta_{\eta_0}^{1/4}\Phi(x)\eta_0
\quad \text{for}\ x\in M.
\]
Finally we show that $T$ is a c.p.\ operator. 
Let $(e_{i, j})$ be a system of matrix units for $\mat$. 
For $[x_{i, j}]\in \mat(M)^+$, we have
\begin{align*}
(T\otimes\mathrm{id}_n)(\Delta_{\xi_0}^{1/4}\otimes\mathrm{id}_n)
(\sum_{i, j=1}^nx_{i, j}\otimes e_{i, j})(\xi_0\otimes 1_n)
&=\sum_{i, j=1}^n T(\Delta_{\xi_0}^{1/4}x_{i, j}\xi_0)\otimes e_{i, j} \\
&=\sum_{i, j=1}^n \Delta_{\eta_0}^{1/4}\Phi(x_{i, j})\eta_0\otimes e_{i, j}.
\end{align*}
Since $\Phi$ is a c.p.\ map, $[\Phi(x_{i, j})]\in \mat(M)^+$. 
Hence $T$ is a c.p.\ operator. 
\epf

In Lemma \ref{lem:unif-bdd} and Theorem \ref{sigma},
we deal with possibly non-contractive c.p.\ operators.
So, we use the symbol $S$ 
for a \emph{not necessarily contractive}
c.p.\ operator.
Similarly, we employ the symbol $\Psi$ 
for a \emph{not necessarily contractive}
c.p.\ map.

\begin{lemma}
\label{lem:unif-bdd}
Let $M$ be a $\sigma$-finite von Neumann algebra with a faithful state
$\varphi\in M_*^+$.
Suppose that
there exists a net of compact c.p.\ operators
$S_n$ on $H_\vph$ such that
$S_n\to 1_{H_\vph}$ in the strong topology
and
$\sup_n\|S_n\|<\infty$.
Then there exists a net of normal c.c.p.\ maps
$\widetilde{\Phi}_m$ on $M$
and compact c.p.\ operators $\wdt{S}_m$ on $H_\vph$
with a new directed set
such that
\begin{itemize}
\item
$\widetilde{\Phi}_m\to\id_M$ in the point-ultraweak topology;

\item
$\sup_n\|\wdt{S}_m\|<\infty$;

\item
$\wdt{S}_m (\De_\vph^{1/4}x\xi_\vph)=\De_\vph^{1/4}\widetilde{\Phi}_m(x)\xi_\vph$
for $x\in M$.
\end{itemize}
\end{lemma}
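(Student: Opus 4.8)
The plan is to manufacture the maps $\widetilde\Phi_m$ out of the operators $S_n$ by inverting the order isomorphism of Lemma~\ref{co}, and only afterwards to produce the symmetric implementers $\widetilde S_m$; the two tasks decouple, and the genuine difficulty lies entirely in the second.

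First I would normalize the data. Applying Lemma~\ref{cyclic} (whose perturbation is a positive rank-one modification, so it preserves complete positivity, compactness and, since $\|S'_n-S_n\|\to0$, the uniform bound) I may assume that each $\eta_n:=S_n\xi_\varphi\in P_\varphi$ is cyclic and separating, with $\eta_n\to\xi_\varphi$. Since $S_n$ is completely positive it is order preserving on $(H_\varphi)_{\mathrm{sa}}$ and sends $P_\varphi$ into itself. For $x\in M^+$ Lemma~\ref{co} gives $0\le\Theta_{\xi_\varphi}(x)\le\|x\|\xi_\varphi$, hence $0\le S_n\Theta_{\xi_\varphi}(x)\le\|x\|\eta_n$, so $S_n\Theta_{\xi_\varphi}(x)\in K_{\eta_n}$ and, applying $\Theta_{\eta_n}^{-1}$, there is a unique $\Phi_n(x)\in M^+$ with $\|\Phi_n(x)\|\le\|x\|$ and $\Delta_{\eta_n}^{1/4}\Phi_n(x)\eta_n=S_n\Delta_\varphi^{1/4}x\xi_\varphi$. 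Extending linearly, $\Phi_n$ is unital (because $\Theta_{\eta_n}(1)=\eta_n=S_n\xi_\varphi$), completely positive (run the same argument at the matrix level using $S_n^{(k)}P_\varphi^{(k)}\subset P_\varphi^{(k)}$ and the matrix order isomorphism), and normal (the maps $\Theta_{\xi_\varphi},\Theta_{\eta_n}$ are $\sigma(M,M_*)$–weakly continuous by Lemma~\ref{co} and $S_n$ is bounded). Thus each $\Phi_n$ is a normal u.c.p.\ map. To see $\Phi_n\to\id_M$ point-ultraweakly, fix $x$ and take any ultraweak cluster point $y$ of the bounded net $(\Phi_n(x))$; writing $\Delta_{\eta_n}^{1/4}\Phi_n(x)\eta_n=(\Delta_{\eta_n}^{1/4}+\Delta_{\eta_n}^{-1/4})^{-1}(\Phi_n(x)\eta_n+J\Phi_n(x)^*\eta_n)$ and letting $n$ run along the subnet, Lemma~\ref{ara} (strong convergence of the resolvents) together with $\eta_n\to\xi_\varphi$ forces the right-hand side to $\Delta_\varphi^{1/4}y\xi_\varphi$ weakly, while the left-hand side is $S_n\Delta_\varphi^{1/4}x\xi_\varphi\to\Delta_\varphi^{1/4}x\xi_\varphi$; since $\xi_\varphi$ is separating, $y=x$.

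It remains to realize a suitable modification of each $\Phi_n$ by a compact symmetric implementer, i.e.\ one satisfying the $\xi_\varphi$-balanced relation of the statement. By Lemma~\ref{wo} (or directly by Proposition~\ref{makecp} with $\xi_0=\eta_0=\xi_\varphi$) such a bounded completely positive implementer exists precisely when $\varphi\circ\Phi_n\le C\varphi$, and this domination is not automatic: $\eta_n\to\xi_\varphi$ only yields $\omega_{\eta_n}\to\varphi$ in norm, not order domination. To force it I would pass to modular averages: with a Gaussian kernel set $\Phi_{n,k}(x):=\sqrt{k/\pi}\int_{\mathbb R}e^{-kt^2}\sigma_t^\varphi\big(\Phi_n(\sigma_{-t}^\varphi(x))\big)\,dt$. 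Each $\Phi_{n,k}$ is again normal u.c.p., is $\sigma^\varphi$-covariant, and $\Phi_{n,k}\to\Phi_n$ point-ultraweakly as $k\to\infty$, so along the product directed set $\Phi_{n,k}\to\id_M$. Covariance makes $\varphi\circ\Phi_{n,k}$ a $\sigma^\varphi$-invariant normal state, hence of the form $\varphi(h_{n,k}\,\cdot\,)$ with $h_{n,k}\ge0$ affiliated to the centralizer $M^{\sigma^\varphi}$; cutting $h_{n,k}$ down by its spectral projections $\chi_{[0,R]}(h_{n,k})\in M^{\sigma^\varphi}$ (a third parameter, again absorbed into the new directed set) yields maps whose associated state is dominated by $C\,\varphi$. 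Proposition~\ref{makecp} then produces bounded completely positive operators $\widetilde S_m$ with $\widetilde S_m(\Delta_\varphi^{1/4}x\xi_\varphi)=\Delta_\varphi^{1/4}\widetilde\Phi_m(x)\xi_\varphi$, and the truncation keeps $\sup_m\|\widetilde S_m\|<\infty$. Finally, covariance gives $\widetilde S_m\Delta_\varphi^{it}=\Delta_\varphi^{it}\widetilde S_m$ and, unwinding the construction, $\widetilde S_m$ is a Gaussian average of operators $\Delta_\varphi^{it}W_nS_n\Delta_\varphi^{-it}$, where $W_n=\Theta_{\xi_\varphi}\Theta_{\eta_n}^{-1}$ is the change-of-vector map; once $W_n$ is bounded this exhibits $\widetilde S_m$ as a norm-convergent average of compact operators, hence compact.

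The hard part is exactly this last step: securing simultaneously the two boundedness estimates—$\varphi\circ\widetilde\Phi_m\le C\varphi$, which makes the symmetric implementer bounded (Lemma~\ref{wo}), and the boundedness of the change-of-vector operator $W_n$, which is what transmits the compactness of $S_n$ to $\widetilde S_m$—uniformly along the net. Neither follows from $\eta_n\to\xi_\varphi$ alone, and reconciling the compression that produces a genuinely compact operator with the requirement that $\widetilde\Phi_m$ still converge to the identity is the crux; the role of the modular averaging together with the centralizer truncation (and the consequent enlargement of the index set promised by the statement) is precisely to bridge this gap between the norm convergence of the vector states and the order/functional domination that Proposition~\ref{makecp} demands.
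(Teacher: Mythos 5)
Your first half (normalizing via Lemma \ref{cyclic}, defining $\Phi_n=\Theta_{\eta_n}^{-1}\circ S_n\circ\Theta_{\xi_\varphi}$, and proving $\Phi_n\to\id_M$ point-ultraweakly from Lemma \ref{ara}) is correct and is essentially Step 1 of the paper's proof. The second half, however, has a genuine gap in the very step you yourself identify as the crux. The Gaussian average $\Phi_{n,k}(x)=\sqrt{k/\pi}\int e^{-kt^2}\sigma_t^\varphi(\Phi_n(\sigma_{-t}^\varphi(x)))\,dt$ is \emph{not} $\sigma^\varphi$-covariant: a change of variables shows $\sigma_s^\varphi\circ\Phi_{n,k}$ and $\Phi_{n,k}\circ\sigma_s^\varphi$ involve the kernels $e^{-k(u-s)^2}$ and $e^{-ku^2}$ respectively, which differ for $s\neq0$. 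Consequently $\varphi\circ\Phi_{n,k}$ is not $\sigma^\varphi$-invariant, Pedersen--Takesaki does not apply, and there is no $h_{n,k}$ in the centralizer to truncate. (Exact covariance would require averaging against an invariant mean on $\R$, which destroys normality.) Even granting some domination $\varphi\circ\widetilde\Phi_m\le C\varphi$, your compactness argument is circular: Proposition \ref{makecp} only yields a \emph{bounded} implementer, and your route to compactness passes through the boundedness of the change-of-vector map $W_n=\Theta_{\xi_\varphi}\Theta_{\eta_n}^{-1}$, which is precisely what is never established ($W_n$ is a priori only defined on order-bounded vectors). The "spectral truncation" of $h_{n,k}$ is also left unspecified as an operation on the \emph{map}, so it is unclear that complete positivity and convergence to $\id_M$ survive.

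The paper bridges this gap without any covariance. Put $\varphi_n:=\omega_{\xi_n}$; Araki--Powers--St{\o}rmer gives $\|\varphi_n-\varphi\|\to0$, so $\psi_n:=\varphi+(\varphi-\varphi_n)_-$ dominates $\varphi_n$, and Sakai's Radon--Nikodym theorem produces $h_n\in M$, $0\le h_n\le1$, with $\varphi_n(x)=\psi_n(h_nxh_n)$. The explicitly completely positive map $\Psi_n(x):=h_nxh_n+(\varphi-\varphi_n)_-(h_nxh_n)1$ then satisfies $\varphi\circ\Psi_n=\varphi_n=\omega_{\xi_n}$ \emph{exactly}, so Proposition \ref{makecp} (applied with $\xi_0=\xi_n$, $\eta_0=\xi_\varphi$) yields a bounded c.p.\ operator $T'_n$ with $T'_n(\Delta_{\xi_n}^{1/4}x\xi_n)=\Delta_\varphi^{1/4}\Phi'_n(x)\xi_\varphi$, where $\Phi'_n=\Psi_n/\Psi_n(1)$-normalization; this $T'_n$ \emph{is} the bounded change-of-vector operator you were missing. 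Compactness of $\widetilde S_n:=T'_nS_n$ is then immediate from compactness of $S_n$, and a final convexity argument upgrades weak to strong convergence. You would need to replace your averaging/truncation step by an argument of this kind (or otherwise prove that $W_n$ extends to a bounded operator) for the proof to close.
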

\begin{proof}
Let $S_n$ be as stated above.
By Lemma \ref{cyclic},
we may and do assume that
$\xi_n:=S_n\xi_\varphi$ is cyclic and separating
by taking sufficiently large $n$ so that $\|S_n\|$
is uniformly bounded.
Let $\Theta_{\xi_\varphi}$ and $\Theta_{\xi_n}$ be the maps
given in Lemma \ref{co}.
Let $x\in M_{\rm sa}$.
Take $c>0$ so that $-c1\leq x\leq c1$.
Then $-c\xi_\vph\leq \De_\vph^{1/4}x\xi_\vph\leq c\xi_\vph$.
Applying $S_n$ to this inequality,
we obtain $-c\xi_n\leq S_n \De_\vph^{1/4}x\xi_\vph\leq c\xi_n$,
because $S_n$ is positive.
Employing Lemma \ref{co},
the operator $\Th_{\xi_n}^{-1}(S_n\De_\vph^{1/4}x\xi_\vph)$ in $M$
is well-defined.
Hence we can define a linear map $\Phi_n\colon M\to M$ by
\[
\Phi_n
=\Theta_{\xi_n}^{-1}\circ S_n\circ\Theta_{\xi_\varphi}.
\]
In other words,
\[
S_n(\Delta_\varphi^{1/4}x\xi_\varphi)
=\Delta_{\xi_n}^{1/4}\Phi_n(x)\xi_n
\quad \text{for}\ x\in M.
\]
It is easy to check that $\Phi_n$ is a normal unital
completely positive (u.c.p.) map.

\vspace{5pt}
\noindent
{\bf Step 1.}
We will show that
$\Phi_n\to\mathrm{id}_ M$ in the point-ultraweak topology.
\vspace{5pt}

Since normal functionals of the form $\omega_{y'\xi_\varphi}$ with $y'\in M'$ 
span a dense subspace in $M_*$, 
it suffices to show that  
\begin{equation}\label{claim0}
\langle \Phi_n(x)\xi_\varphi, y'\xi_\varphi\rangle
\to \langle x\xi_\varphi, y'\xi_\varphi\rangle
\quad \text{for}\ x\in M,\ y'\in M'_{\mathrm{sa}}.
\end{equation}
To prove it, we first claim that 
\begin{equation}\label{claim1}
\|\Delta_{\xi_n}^{1/4}\Phi_n(x)\xi_n-\Delta_\varphi^{1/4}x\xi_\varphi\|
\to 0.
\end{equation}
Indeed, since $S_n\to 1_{H_\varphi}$ in the strong topology, 
we have 
\[
\|S_n(\Delta_\varphi^{1/4}x\xi_\varphi)-\Delta_\varphi^{1/4}x\xi_\varphi\|
\to 0.
\] 
Hence our claim (\ref{claim1}) follows. 
Secondly we claim that 
\begin{equation}\label{claim2}
\|\Delta_{\xi_n}^{-1/4}y'\xi_n-\Delta_\varphi^{-1/4}y'\xi_\varphi\|
\to 0.
\end{equation}
Indeed, if we set $y:=J y'J\in M_{\mathrm{sa}}$, 
then it is equivalent to the condition
\[
\|\Delta_{\xi_n}^{1/4}y\xi_n-\Delta_\varphi^{1/4}y\xi_\varphi\|
\to 0.
\]
Since 
\[
\Delta_\varphi^{1/4}y\xi_\varphi
=(J+1)(\Delta_\varphi^{1/4}+\Delta_\varphi^{-1/4})^{-1}y\xi_\varphi
\]
and 
\[
\Delta_{\xi_n}^{1/4}y\xi_n
=(J+1)(\Delta_{\xi_n}^{1/4}+\Delta_{\xi_n}^{-1/4})^{-1}y\xi_n,
\]
our claim (\ref{claim2}) is also equivalent to the condition 
\[
\|(\Delta_{\xi_n}^{1/4}+\Delta_{\xi_n}^{-1/4})^{-1}y\xi_n-(\Delta_\varphi^{1/4}+\Delta_\varphi^{-1/4})^{-1}y\xi_\varphi\|
\to 0.
\]
However it easily follows from Lemma \ref{ara} 
and $\|\xi_n-\xi_\varphi\|\to 0$. 
Thus to prove (\ref{claim0}), 
it suffices to show that 
\[
\langle \Phi_n(x)\xi_n, y'\xi_n\rangle\to\langle x\xi_\varphi, y'\xi_\varphi\rangle,
\]
because $\|\xi_n-\xi_\varphi\|\to 0$. 
By (\ref{claim2}), 
there is a constant $C_{y'}>0$ and $n_0$
such that 
\[
\|\Delta_{\xi_n}^{-1/4}y'\xi_n\|
\leq C_{y'}
\quad \text{for all}\ n\geq n_0.
\] 
By using (\ref{claim1}) and (\ref{claim2}), we have 
\begin{align*}
&|\langle\Phi_n(x)\xi_n, y'\xi_n\rangle-\langle x\xi_\varphi, 
y'\xi_\varphi\rangle| \\
&\hspace{3mm}=|\langle\Delta_{\xi_n}^{1/4}\Phi_n(x)\xi_n, 
\Delta_{\xi_n}^{-1/4}y'\xi_n\rangle
-\langle \Delta_{\varphi}^{1/4}x\xi_\varphi, 
\Delta_{\varphi}^{-1/4}y'\xi_\varphi\rangle| \\
&\hspace{3mm}\leq|\langle\Delta_{\xi_n}^{1/4}\Phi_n(x)\xi_n
-\Delta_\varphi^{1/4}x\xi_\varphi, 
\Delta_{\xi_n}^{-1/4}y'\xi_n\rangle|
+|\langle \Delta_{\varphi}^{1/4}x\xi_\varphi, 
\Delta_{\xi_n}^{-1/4}y'\xi_n
-\Delta_{\varphi}^{-1/4}y'\xi_\varphi\rangle| \\
&\hspace{3mm}\leq C_{y'}\|\Delta_{\xi_n}^{1/4}\Phi_n(x)\xi_n
-\Delta_\varphi^{1/4}x\xi_\varphi\|
+\|\Delta_{\varphi}^{1/4}x\xi_\varphi\|
\|\Delta_{\xi_n}^{-1/4}y'\xi_n-\Delta_{\varphi}^{-1/4}y'\xi_\varphi\| \\
&\hspace{3mm}\to 0.
\end{align*}
Therefore we obtain our claim (\ref{claim0}), 
that is, $\Phi_n\to\mathrm{id}_ M$ in the point-ultraweak topology.

\vspace{5pt}
\noindent
{\bf Step 2.}
We will make a small perturbation of $\Phi_n$.
\vspace{5pt}

Put $\varphi_n:=\omega_{\xi_n}\in M_*^+$. 
Since $\|\xi_n-\xi_\varphi\|\to 0$, 
we have $\|\varphi_n-\varphi\|\to 0$ 
by the Araki--Powers--St\o rmer inequality. 
If we set $\psi_n:=\varphi+(\varphi-\varphi_n)_-$, then $\varphi_n\leq\psi_n$. 
Thanks to Sakai's Radon--Nikodym theorem \cite[Theorem 1.24.3]{sak},
there exists $h_n\in M$ with $0\leq h_n\leq 1$ 
such that $\varphi_n(x)=\psi_n(h_n xh_n)$ for $x\in M$. 
We define a c.p.\ map $\Psi_n\colon M\to M$ by
\[
\Psi_n(x)
:=h_n xh_n+(\varphi-\varphi_n)_-(h_n xh_n)1
\quad \text{for}\ x\in M.
\]
Note that $\|\psi_n-\varphi\|=\|(\varphi-\varphi_n)_-\|\leq\|\varphi-\varphi_n\|\to 0$. 
Since 
\begin{align*}
\varphi(1-h_n^2)
&\leq\psi_n(1-h_n^2) \\
&=\psi_n(1)-\varphi_n(1) \\
&=\|\psi_n-\varphi_n\| \\
&\leq\|\psi_n-\varphi\|+\|\varphi_n-\varphi\|\to 0,
\end{align*}
we have $(1-h_n^2)^{1/2}\to 0$ in the strong topology. 
Moreover since 
\[
\|(1-h_n)\xi\|^2
=\langle(1-h_n)^2\xi, \xi\rangle\leq\langle(1-h_n^2)\xi, \xi\rangle
=\|(1-h_n^2)^{1/2}\xi\|
\quad \text{for}\ \xi\in H_\varphi,
\]
we have $h_n\to 1$ in the strong topology. 
Consequently, for $x\in M$, 
we have $h_n xh_n\to x$ in the strong topology. 
Therefore $\Psi_n\to \mathrm{id}_ M$ in the point-ultraweak topology.
Since
\[
\Psi_n(1)
=h_n^2+(\varphi-\varphi_n)_-(h_n^2)1
\leq 1+\|\varphi-\varphi_n\|
=:C_n\to 1,
\] 
a c.p.\ map $\Phi'_n:=\Psi_n/C_n$ is contractive 
such that $\Phi'_n\to\mathrm{id}_ M$ in the point-ultraweak topology. 

Moreover for $x\in M^+$ we have 
\begin{align*}
\varphi\circ\Phi'_n(x)
&=\frac{1}{C_n}\varphi(\Psi_n(x))
=\frac{1}{C_n}\psi_n(h_n xh_n)\\
&=\frac{1}{C_n}\varphi_n(x)
\leq\varphi_n(x).
\end{align*}
By Proposition \ref{makecp}, 
there exists a c.c.p.\ operator $T'_n$ on $H_\varphi$ by
\[
T'_n(\Delta_n^{1/4}x\xi_n)
:=\Delta_\varphi^{1/4}\Phi'_n(x)\xi_\varphi
\quad \text{for}\ x\in M.
\]
Since $\Phi'_n\to\mathrm{id}_ M$ in the point-ultraweak topology,
$T'_n\to 1_{H_\varphi}$ in the weak topology.

Now we define a normal c.c.p.\ map $\widetilde{\Phi}_n:=\Phi'_n\circ\Phi_n$ on $M$ 
and a c.p.\ compact operator $\widetilde{S}_n:=T'_n S_n$ on $H_\varphi$
which satisfies $\sup_n\|\widetilde{S}_n\|<\infty$.
Then we have
\[
\widetilde{S}_n(\Delta_\varphi^{1/4}x\xi_\varphi)
=\Delta_\varphi^{1/4}\widetilde{\Phi}_n(x)\xi_\varphi
\quad \text{for}\ x\in M.
\]

We first claim that $\widetilde{S}_n\to 1_{H_\varphi}$
in the weak topology.
Indeed, for $\xi, \eta\in H_\varphi$, we have
\begin{align*}
|\langle \widetilde{S}_n\xi, \eta\rangle-\langle\xi, \eta\rangle|
&=|\langle T'_n S_n\xi, \eta\rangle-\langle\xi, \eta\rangle| \\
&\leq |\langle T'_n S_n\xi-T'_n\xi, \eta\rangle|
+|\langle T'_n\xi-\xi, \eta\rangle| \\
&\leq\|S_n\xi-\xi\| \|\eta\|+|\langle T'_n\xi-\xi, \eta\rangle| \\
&\to 0.
\end{align*}

Next we claim that $\widetilde{\Phi}_n\to\mathrm{id}_ M$ in the point-ultraweak topology. 
It suffices to show that 
\[
\langle\widetilde{\Phi}_n(x)\xi_\varphi, y'\xi_\varphi\rangle
\to
\langle x\xi_\varphi, y'\xi_\varphi\rangle,
\quad \text{for}\ x\in M, y'\in M'.
\]
Indeed, 
\begin{align*}
\langle\widetilde{\Phi}_n(x)\xi_\varphi, y'\xi_\varphi\rangle
&=\langle\Delta_\varphi^{1/4}\widetilde{\Phi}_n(x)\xi_\varphi,
\Delta_\varphi^{-1/4}y'\xi_\varphi\rangle \\
&=\langle T_n(\Delta_\varphi^{1/4}x\xi_\varphi), \Delta_\varphi^{-1/4}y'\xi_\varphi\rangle \\
&\to\langle\Delta_\varphi^{1/4}x\xi_\varphi, \Delta_\varphi^{-1/4}y'\xi_\varphi\rangle=\langle x\xi_\varphi, y'\xi_\varphi\rangle.
\end{align*}

Finally, by taking suitable convex combinations, 
we can arrange $\wdt{S}_n$ and $\widetilde{\Phi}_n$
so that we obtain c.p.\ compact operators $\wdt{S}_m$ on $H_\varphi$
and normal c.c.p.\ maps $\widetilde{\Phi}_m$ on $M$
so that $\wdt{S}_m\to 1_{H_\varphi}$ in the strong topology,
$\sup_m\|\wdt{S}_m\|<\infty$,
$\widetilde{\Phi}_m\to\mathrm{id}_ M$ in the point-ultraweak topology
and
\[
\wdt{S}_m(\Delta_\varphi^{1/4}x\xi_\varphi)
=\Delta_\varphi^{1/4}\widetilde{\Phi}_m(x)\xi_\varphi
\quad \text{for}\ x\in M.
\]
\end{proof}

Now we are ready to prove the main theorem in this section. 

\bthm\label{sigma}
Let $M$ be a $\sigma$-finite von Neumann algebra with a faithful state
$\varphi\in M_*^+$.
Then the following statements are equivalent:
\begin{itemize}
\item[(1)]
$M$ has the HAP;

\item[(2)]
There exists a net of compact c.p.\ operators
$S_n$ on $H_\vph$ such that
$S_n\to 1_{H_\vph}$ in the strong topology
and $\sup_n\|S_n\|<\infty$;

\item[(3)]
There exists
a net of normal c.c.p.\ maps $\Phi_n$ on $M$
satisfying the following conditions:

\begin{itemize}
\item[(i)]
$\vph\circ\Ph_n\leq\vph$ for all $n$;

\item[(ii)]
$\Phi_n\to\mathrm{id}_{M}$ in the point-ultraweak topology;

\item[(iii)]
The associated c.c.p.\ operators $T_n$ on $H_\vph$
defined below are compact
and
$T_n\to\mathrm1_{H_\varphi}$ in the strong topology:
\[
T_n(\Delta_\varphi^{1/4}x\xi_\varphi)
=\Delta_\varphi^{1/4}\Phi_n(x)\xi_\varphi
\quad \text{for}\ x\in M.
\]
\end{itemize}
\end{itemize}
\ethm

\bpf
The implications
(1)$\Rightarrow$(2)
and
(3)$\Rightarrow$(1) are trivial.

(2)$\Rightarrow$(3).
Let us take $\widetilde{\Ph}_m$ and $\widetilde{S}_n$ 
as in the previous lemma.
We will arrange normal c.c.p.\ maps $\widetilde{\Phi}_m$
so that $\varphi\circ\widetilde{\Phi}_m\leq\varphi$.

We define $\chi_m\colon M_*\to M_*$ by
$\chi_m(\om):=\om\circ\widetilde{\Ph}_m$
for $\om\in M_*$.
By the convexity argument,
we may assume that $\|\chi_m(\omega)-\omega\|\to 0$ 
for $\omega\in M_*$. 
Set $\varphi_m:=\chi_m(\varphi)$. 
Note that $\|\varphi_m-\varphi\|\to 0$. 
Since $\widetilde{\Phi}_m(1)\to 1$ in the ultraweak topology, 
we may also assume that $\varphi_m(1)\ne 0$. 
Since 
\[
\psi_m
:=\varphi_m+(\varphi_m-\varphi)_-\geq\varphi,
\] 
by Sakai's Radon--Nikodym theorem, 
there is $h_m\in M$ with $0\leq h_m\leq 1$ 
such that $\varphi(x)=\psi_m(h_m xh_m)$ 
for $x\in M$. 
Then we define a normal c.p.\ map $\Psi_m$ on $M$ by
\[
\Psi_m(x)
:=h_m xh_m+\frac{1}{\varphi_m(1)}(\varphi_m-\varphi)_-(h_m xh_m)1
\quad \text{for}\ x\in M.
\]
Note that 
\begin{align*}
\varphi_m\circ\Psi_m(x)
&=\varphi_m(h_m xh_m)+\frac{1}{\varphi_m(1)}(\varphi_m-\varphi)_-(h_m xh_m)\varphi_m(1) \\
&=\psi_n(h_n xh_n)=\varphi(x).
\end{align*}
Since 
\begin{align*}
\varphi(1-h_m^2)
&\leq\psi_m(1-h_m^2) \\
&=\psi_m(1)-\varphi(1) \\
&=\|\psi_m-\varphi\| \\
&\leq\|\varphi_m-\varphi\|+\|(\varphi_m-\varphi)_-\| \\
&\to 0,
\end{align*}
we have $h_m\to 1$ in the strong topology. 
Hence $h_m xh_m\to x$ in the strong topology for $x\in M$. 
%Therefore $\Psi_m$ is normal. 
Moreover, since 
\[
\|(\varphi_m-\varphi)_-(h_m xh_m)\|
\leq\|\varphi_m-\varphi\|\|x\|\to 0
\quad \text{for}\ x\in M,
\] 
%and $\varphi_n(1)\to\varphi(1)=1$, 
we have $\Psi_m\to\mathrm{id}_ M$ in the point-ultraweak topology.

Note that 
\begin{align*}
\Psi_m(1)
&=h_m^2+\frac{1}{\varphi_m(1)}(\varphi_m-\varphi)_-(h_m^2) \\
&\leq 1+\frac{1}{\varphi_m(1)}\psi_m(h_m^2) \\
&= 1+\frac{1}{\varphi_m(1)}=:C_m\to 1,
\end{align*}
and for $x\in M^+$, 
\begin{align*}
\varphi\circ\Psi_m(x)
&=\varphi(h_m xh_m)+\frac{1}{\varphi_m(1)}(\varphi_m-\varphi)_-(h_m xh_m) \\
&\leq C_m\psi_m(h_m xh_m) \\
&=C_m\varphi(x). 
\end{align*}
By Proposition \ref{makecp}, 
we obtain a c.p.\ operator $S_m$ on $H_\varphi$ 
with $\|S_m\|\leq C_m$ 
such that 
\[
S_m(\Delta_\varphi^{1/4}x\xi_\varphi)
=\Delta_\varphi^{1/4}\Psi_m(x)
\xi_\varphi\quad \text{for}\ x\in M.
\]
We may and do assume that $\sup_m\|S_m\|\leq\sup_mC_m<\infty$.
Notice that $S_m\to 1_{H_\varphi}$ in the weak topology, 
because $\Psi_m\to\mathrm{id}_M$ in the point-ultraweak topology. 

Finally we define a normal c.p.\ map $\Psi'_m:=\widetilde{\Phi}_m\circ\Psi_m$ on $M$ 
and a c.p.\ compact operator $S'_m:=\widetilde{S}_m S_m$ on $H_\varphi$.  
Then $\varphi\circ\Psi'_m=\varphi$ 
and
\[
S'_m(\Delta_\varphi^{1/4}x\xi_\varphi)
=\Delta_\varphi^{1/4}\Psi'_m(x)\xi_\varphi
\quad \text{for}\ x\in M.
\]
Moreover for $\omega\in M_*$, 
we have
\begin{align*}
|\langle \Psi'_m(x)-x, \omega\rangle|
&\leq|\langle \Psi_m(x), \chi_m(\omega)-\omega\rangle|+|\langle \Psi_m(x)-x, \omega\rangle| \\
&\leq C_m\|x\|\|\chi_m(\omega)-\omega\|+|\langle \Psi_m(x)-x, \omega\rangle| \\
&\to 0.
\end{align*}
Therefore $\Psi'_m\to\mathrm{id}_ M$ in the point-ultraweak topology, 
and thus $S'_m\to 1_{H_\varphi}$ in the weak topology,
because $\sup_m\|S'_m\|<\infty$. 

Note that 
\begin{align*}
\Psi'_m(1)
&=\Phi_m(h_m^2)+\frac{1}{\varphi_m(1)}(\varphi_m-\varphi)_-(h_m^2)\Phi_m(1) \\
&\leq1+\frac{\|\varphi_m-\varphi\|}{\varphi_m(1)}
=:C'_m\to 1.
\end{align*}
We define a normal c.c.p.\ map $\Phi_m$ on $M$ by 
$\Phi_m
:=\Psi'_m/C'_m$.

Note that $\varphi\circ\Phi_m\leq\varphi$ 
and $\Phi_m\to\mathrm{id}_ M$ in the point-ultraweak topology. 
By Proposition \ref{makecp}, 
we have a c.c.p. operator $T_m$ on $H_\varphi$, 
which is given by
\[
T_m(\Delta_\varphi^{1/4} x\xi_\varphi)
=\Delta_\varphi^{1/4}\Phi_m(x)\xi_\varphi
\quad \text{for}\ x\in M.
\]
Then
$T_m=S'_m/C'_m$ is compact
%Moreover, for $x\in M$, 
%\begin{align*}
%\widetilde{T}_n(\Delta_\varphi^{1/4} x\xi_\varphi)
%&=\Delta_\varphi^{1/4}\widetilde{\Phi}_n(x)\xi_\varphi
%=\frac{1}{D_n}\Delta_\varphi^{1/4}\Phi''_n(x)\xi_\varphi \\
%&=\frac{1}{D_n}T''_n(\Delta_\varphi^{1/4} x\xi_\varphi).
%\end{align*}
%Since $\Delta_\varphi^{1/4} M\xi_\varphi$ is dense in $H_\varphi$, 
%we have
%$\widetilde{T}_n
%=D_n^{-1}T''_n$.
%Thus $\widetilde{T}_n$ is compact 
and $T_m\to 1_{H_\varphi}$ in the weak topology.
By the convexity argument,
we may and do assume that
%we obtain a net of c.c.p.\ maps $\widetilde{\Phi}_m$ on $M$
%with
$\Phi_n\to\mathrm{id}_ M$ in the point-ultraweak topology,
%a net of c.c.p.\ compact operators $\widetilde{T}_n$ on $H_\varphi$
%with
$T_n\to 1_{H_\varphi}$ in the strong topology
and, moreover,
%such that
$\varphi\circ\Phi_n\leq\varphi$
and 
\[
T_n(\Delta_\varphi^{1/4}x\xi_\varphi)
=\Delta_\varphi^{1/4}\Phi_n(x)\xi_\varphi
\quad \text{for}\ x\in M.
\]
\epf

\brem\label{torperem}
The proof of Theorem \ref{sigma} is essentially based on the one of \cite{tor}. 
The proof above can be also applied to show Theorem \ref{tor}. 
Also note that
we have proved the existence of c.c.p.\ maps $\Ph_n$
such that
$\vph\circ\Ph_n=\la_n\vph$
for some $0<\la_n\leq1$.
In particular, $\Ph_n$ is faithful.
\erem

%%%%%%%%%%%%%%%%%%%%%%%%%%%%%%%%%%%%%%%%%%%%%%%%%%%%%%%%%%%%%%%%%%%%%%%%%%%%%%%

\subsection{Commutativity of c.c.p.\ operators with modular groups}

%%%%%%%%%%%%%%%%%%%%%%%%%%%%%%%%%%%%%%%%%%%%%%%%%%%%%%%%%%%%%%%%%%%%%%%%%%%%%%%

In this subsection,
we study the Haagerup approximation property
such that a net of c.c.p.\ compact operators
are commuting a modular group.

\bdf
Let $M$ be a von Neumann algebra
with a f.n.s.\ weight $\vph$.
We will say that
$M$ has the $\vph$-\emph{Haagerup
approximation property}
($\vph$-HAP)
if
c.c.p.\ compact operators
$T_n$ introduced in Definition \ref{defn:HAP}
are moreover commuting with
$\De_\vph^{it}$ for all $t\in\R$.
\edf

In this case,
we can take unital $\varphi$-preserving $\Ph_n$'s as shown below.

\bthm\label{thm:sigma2}
Let $M$ be a $\sigma$-finite von Neumann algebra
with a faithful state $\varphi\in M_*^+$. 
If $M$ has the $\vph$-HAP,
then
there exist a net of c.c.p.\ compact operators $T_n$ on $H_\varphi$
with $T_n\to\mathrm1_{H_\varphi}$ in the strong topology,
and a net of normal u.c.p.\ maps $\Phi_n$ on $M$
with $\Phi_n\to\mathrm{id}_{M}$ in the point-ultraweak topology
such that
\benu
\item
$\vph\circ\Ph_n=\vph$
for all $n$.

\item
$T_n(\Delta_\varphi^{1/4}x\xi_\varphi)
=\Delta_\varphi^{1/4}\Phi_n(x)\xi_\varphi
\quad \text{for}\ x\in M$
for all $n$;
\eenu
\ethm

\bpf
Suppose that $M$ has the $\vph$-HAP.
Recall the proof of Theorem \ref{sigma}.
We let the starting $T_n$ be commuting
with $\De_\vph^{it}$
for all $t\in\R$.
Then it is not so difficult to
check that the last $\Ph_n$ is commuting
with $\si_t^\vph$ for all $t\in\R$.
So, we have $T_n$ and $\Ph_n$
stated in Theorem \ref{sigma}
and they are commuting with the modular group.
%Let us use the letters $T_n,\Ph_n$
%for $\widetilde{T}_m$ and $\widetilde{\Ph}_m$
%in what follows.
Thus we have c.c.p.\ compact operators $T_n$
on $H_\varphi$
and normal c.c.p.\ maps $\Phi_n$ on $M$ 
such that
\begin{itemize}
\item
$
T_n(\Delta_\varphi^{1/4}x\xi_\varphi)
=\Delta_\varphi^{1/4}\Phi_n(x)\xi_\varphi
\quad \text{for all}\ x\in M$;
%
%\item
%$\De_\vph^{it}T_n=T_n\De_\vph^{it}$
%for all $t\in\R$,

\item
$\sigma_t^\varphi\circ\Phi_n
=\Phi_n\circ\sigma_t^\varphi
\quad \text{for all}\ t\in\R$.
\end{itemize}

%\vspace{5pt}
%\noindent
%{\bf Step 2.}
We will make a small perturbation of $\Ph_n$
so that its perturbation is unital.
%
%\vspace{5pt}
%
Set $\varphi_n:=\varphi\circ\Phi_n$.
Then $\varphi_n\circ\sigma_t^\varphi=\varphi_n$ for $t\in\R$. 
By \cite[Thereom 5.12]{pt}, 
there exists $h_n\in M_\varphi$ 
with $0\leq h_n\leq 1$ 
such that $\varphi_n(x)=\varphi(h_n x)$ for $x\in M$, 
where $M_\varphi$ denotes the centralizer of $\varphi$,
\[
M_\varphi
:=
\{x\in M\mid x\vph=\vph x\}
=
\{x\in M \mid \sigma_t^\varphi(x)=x
\quad \text{for}\ t\in\R\}.
\] 
Note that $\varphi_n(1)=\varphi(h_n)$. 
We may assume that $h_n\ne 1$. 
We set 
\[
x_n
:=\frac{1}{\varphi(1-h_n)}(1-\Phi_n(1))
\quad \text{and}\ 
y_n
:=1-h_n.
\]
Next we define a normal c.p.\ map $\Phi_n$ on $M$ by
\[
\Phi_n(x)
:=\Phi_n(x)+\varphi(y_n x)x_n
\quad \text{for}\ x\in M.
\]
Then $\varphi\circ\Phi_n=\varphi$. 
By Proposition \ref{makecp}, 
we obtain a c.p.\ operator $S_n$ on $ M_\varphi$ by
\[
S_n(\Delta_\varphi^{1/4}x\xi_\varphi)
:=\Delta_\varphi^{1/4}\Phi_n(x)\xi_\varphi
\quad \text{for}\ x\in M.
\]
Note that $S_n$ is compact, because
\begin{align*}
S_n(\Delta_\varphi^{1/4}x\xi_\varphi)
&=\Delta_\varphi^{1/4}\Phi_n(x)\xi_\varphi \\
&=\Delta_\varphi^{1/4}\Phi_n(x)\xi_\varphi+\varphi(y_n x)\Delta_\varphi^{1/4}x_n\xi_\varphi \\
&=T_n(\Delta_\varphi^{1/4}x\xi_\varphi)+\varphi(y_n x)\Delta_\varphi^{1/4}x_n\xi_\varphi, 
\end{align*}
Moreover 
\begin{align*}
\Phi_n(1)
&=\Phi_n(1)+\varphi(y_n)x_n \\
&=\Phi_n(1)+\varphi(1-h_n)\frac{1}{\varphi(1-h_n)}(1-\Phi_n(1)) \\
&=1.
\end{align*}
Finally since $y_n\in M_\varphi$, we have 
\begin{align*}
0&\leq\Psi_n(x)-\Phi_n(x)
= \varphi(y_n x )x_n
\\
&\leq \|x\|\varphi(y_n)x_n
= \|x\|(1-\Phi_n(1))
\quad \text{for}\ x\in M^+,
\end{align*}
Therefore $\Psi_n\to\mathrm{id}_ M$ in the point-ultraweak topology. 
%\erem
%
\epf

\bthm
Let $(M_1, \varphi_1)$
and $(M_2, \varphi_2)$
be two $\sigma$-finite von Neumann algebras
with faithful normal states. 
If $M_i$ has the $\vph_i$-HAP, $i=1,2$,
then
the free product
$(M_1, \varphi_1)\star(M_2, \varphi_2)$
has the $\vph_1\star\vph_2$-HAP.
\ethm

\bpf
The proof is essentially given in \cite[Proposition 3.9]{boc}. 
We will give a sketch of a proof.
Assume that for $i=1,2$,
there exists a net of normal u.c.p.\ maps $\Phi_n^i$ on $ M$ 
such that $\varphi_i\circ\Phi_n^i=\varphi_i$ 
and $\Phi_n^i\to\mathrm{id}_{M_i}$ in the point-ultraweak topology. 
The corresponding c.c.p.\ compact operators $T_n^i$ on $H_{\varphi_i}$
are defined by  
\[
T_n^i(\Delta_{\varphi_i}^{1/4}x\xi_{\varphi_i})
=\Delta_{\varphi_i}^{1/4}\Phi_n^i(x)\xi_{\varphi_i}
\quad \text{for}\ x\in M_i.
\]
Set $(M, \varphi):=(M_1, \varphi_1)\star(M_2, \varphi_2)$. 
Then we obtain normal u.c.p.\ maps $\Phi_n:=\Phi_n^1\star\Phi_n^2$
such that
$\varphi\circ\Phi_n=\varphi$
and
$\Ph_n$ is commuting with $\si^\vph$.
We write $H_{\varphi_i}^\circ:=\ker\varphi_i$ for $i=1, 2$.
Since $T_n^i=1\oplus (T_n^i)^\circ$ on 
$H_{\varphi_i}
=\C\xi_{\varphi_i}\oplus H_{\varphi_i}^\circ$, 
we can define 
$T_n
:=T_n^1\star T_n^2$ 
on $(H, \xi):=(H_{\varphi_1}, \xi_{\varphi_1})\star(H_{\varphi_2}, \xi_{\varphi_2})$ by 
\begin{align*}
T_n\xi
&=\xi, \\
T_n(\xi_{i_1}\otimes\cdots\otimes\xi_{i_n})
&=(T_n^{i_1})^\circ\xi_{i_1}\otimes\cdots\otimes(T_n^{i_n})^\circ\xi_{i_n}\quad \text{for}\ i_1\ne\cdots\ne i_n.
\end{align*}
Then each $T_n$ is the corresponding c.c.p.\ compact operator with $\Phi_n$, 
and $T_n\to 1_H$ in the strong topology. 
\epf

\brem
Let $M$ be a $\sigma$-finite von Neumann algebra 
with a faithful state $\varphi\in M_*^+$. 
Suppose that $M$ has the HAP for $\varphi$ 
in the sense of \cite[Definition 6.3]{dfsw}, 
i.e., there exist a net of $\varphi$-preserving 
normal u.c.p.\ maps $\Phi_n$ on $M$ with $\Phi_n\to\mathrm{id}_M$ 
in the point-ultraweak topology, 
and a net of compact contractions $T_n$ on $H_\varphi$ 
with $T_n\to 1_{H_\varphi}$ in the strong topology 
such that
\[
T_n (x\xi_\varphi)
=\Phi_n(x)\xi_\varphi
\quad \text{for}\ x\in M.
\]
If the above normal u.c.p.\ normal maps $\Phi_n$
satisfy
%the condition (\ref{modular}), i.e., 
\[
\sigma_t^\varphi\circ\Phi_n
=\Phi_n\circ\sigma_t^\varphi
\quad \text{for all}\ t\in\R,
\]
then $M$ has the $\vph$-HAP in our sense.
Indeed, for $x\in M^+$, as in \cite[VIII \S 2 Lemma 2.3]{t2}, we put
\[
x_\gamma
:=\sqrt{\frac{\gamma}{\pi}}
\int_{\R}
\exp(-\gamma t^2)\sigma_t^\varphi(x)\,dt.
\]
Then $x_\gamma$ is entire for $\gamma>0$.
Hence 
\begin{align*}
T_n (\Delta_\varphi^{1/4}x_\gamma\xi_\varphi)
&=T_n (\sigma_{-i/4}^\varphi(x_\gamma)\xi_\varphi)
=\Phi_n(\sigma_{-i/4}^\varphi(x_\gamma))\xi_\varphi \\
&=\sigma_{-i/4}^\varphi(\Phi_n(x_\gamma))\xi_\varphi
=\Delta_\varphi^{1/4}\Phi_n(x_\gamma)\xi_\varphi.
\end{align*}
Since $x_\gamma\to x$ in the ultraweak topology as $\gamma\to+\infty$, and 
\[
\Delta_\varphi^{1/4}(x_\gamma\xi_\varphi-x\xi_\varphi)
=(J_\varphi+1)(\Delta_\varphi^{1/4}+\Delta_\varphi^{-1/4})^{-1}(x_\gamma\xi_\varphi-x\xi_\varphi),
\]
we have
\[
T_n(\Delta_\varphi^{1/4}x\xi_\varphi)
=\Delta_\varphi^{1/4}\Phi_n(x)\xi_\varphi
\quad \text{for}\ x\in M.
\] 
Therefore the above compact contraction $T_n$ is,
in fact,
a c.p.\ operator on $H_\varphi$, 
and thus $M$ has the $\vph$-HAP.
\erem

The following result states that
the combination of the HAP and
the existence of an almost periodic
state $\vph$ implies $\vph$-HAP.

\bthm
\label{thm:vph-T}
Let $M$ be a $\sigma$-finite von Neumann algebra
with the HAP.
If there exists
a faithful almost periodic state $\vph\in M_*^+$,
then
$M$ has the $\vph$-HAP.
\ethm

\begin{proof}
Thanks to \cite{co2},
there exist a compact group $G$,
an action $\si\col G\to\Aut(M)$
and a continuous group homomorphism
$\rho\col\R\to G$
such that
$\si_t^\vph=\si_{\rho(t)}$ for $t\in\R$
and $\rho$ has the dense range.
Let $U\col G\to \bB(H_\vph)$ be the associated
unitary representation which implements
$\si$.
Note that $U_g P=P$ and
$J_\vph U_g=U_gJ_\vph$.
Hence, $U_g$ is a c.p.\ unitary operator.

Let $(T_n)$ be a net of c.c.p.\ compact operators
such that $T_n\to1_{H_\vph}$ in the strong topology.
We put
\[
\widetilde{T}_n:=\int_G U_g T_nU_g^*\,dg.
\]
Then $\widetilde{T}_n$ belongs to $\bK(H_\vph)$
because the compactness of $T$
implies the norm continuity of
the map
$G\ni g\mapsto U_g T_nU_g^*\in\bK(H_\vph)$.
It is clear that $\widetilde{T}_n$ is contractive
and commuting with $\De_\vph^{it}=U_{\rho(t)}$
for all $t\in\R$.
We will show the complete positivity
of $\widetilde{T}_n$.
Let $[\xi_{i,j}]\in P^{(m)}$, $m\in\N$.
Take $x_1,\dots,x_m\in M$.
Then we have
\begin{align*}
\sum_{i,j=1}^m
x_iJ_\vph x_jJ_\vph
\widetilde{T}_n\xi_{i,j}
&=
\int_G
dg
\sum_{i,j=1}^m
x_iJ_\vph x_jJ_\vph
U_g T_n U_g^*\xi_{i,j}
\\
&=
\int_G
dg\,
U_g
\sum_{i,j=1}^m
\si_{g^{-1}}(x_i)
J_\vph\si_{g^{-1}}(x_j)J_\vph
T_n U_g^*\xi_{i,j}.
\end{align*}
Since $T_n U_g^*$ is a c.p.\ operator,
$\si_{g^{-1}}(x_i)
J_\vph\si_{g^{-1}}(x_j)J_\vph
T_n U_g^*\xi_{i,j}\in P$
for each $g\in G$,
and the integration above
belongs to $P$.
Hence $\widetilde{T}_n$
is a c.p.\ operator.

We will check that $\widetilde{T}_n\to1_{H_\vph}$
in the strong topology.
Let $\xi\in H_\vph$.
Then the set $K:=\{U_g^*\xi\mid g\in G\}$
is norm compact,
and $T_n\to1_{H_\vph}$ uniformly on $K$
in the strong topology.
Thus we are done.
\end{proof}

\bcor
Let $M$ be a $\sigma$-finite von Neumann algebra
with the HAP.
If there exists
a faithful almost periodic state $\vph\in M_*^+$,
then
there exists a net of normal u.c.p.\ maps $\Ph_n$
on $M$ such that
\begin{enumerate}
\item[\rm{(1)}]
$\vph\circ\Ph_n=\vph$ for all $n$;

\item[\rm{(2)}]
$\Ph_n\circ\si_t^\vph
=\si_t^\vph\circ\Ph_n$
for all $t\in\R$;

\item[\rm{(3)}]
$\Ph_n\to\id_M$ in the point-ultraweak topology;

\item[\rm{(4)}]
The following
associated operator $T_n$ on $H_\vph$
is compact:
\[
T_n (x\xi_\vph)=\Ph_n(x)\xi_\vph
\quad\mbox{for }x\in M.
\]
\end{enumerate}
\ecor

\bex
The following examples have the HAP for $\varphi$
in the sense of \cite[Definition 6.3]{dfsw}. 
%
%
%Actually, they also have the $\vph$-HAP in our sense.
All known examples so far have the $\vph$-HAP.

\begin{itemize}
\item The free Araki--Woods factors \cite{hr};
\item The free quantum groups \cite{cfy};
\item The duals of quantum permutation groups \cite{br1};
\item The duals of Wang's quantum automorphism groups \cite{br2};
\item The duals of quantum reflection groups \cite{le}.
\end{itemize}
\eex

\brem
The Haar state $h$ on a compact quantum group $G$
is almost periodic.
Thus if $L^\infty(G)$, the function algebra on $G$,
has the HAP,
then $L^\infty(G)$ has the $h$-HAP.
\erem

%%%%%%%%%%%%%%%%%%%%%%%%%%%%%%%%%%%%%%%%%%%%%%%%%%%%%%%%%%%%%%%%%%%%%%%%%%%%%%%%%%%%%%%%%%%%%%%%%%%%%%%%%%%%%%%%%%%%%%%%%%%%%%%%%%%%%%%%%%%%%%%%%%%%%%%%%%%%%%%%%%%%%%%%%%%%%%%%%%%%%%%%%%%%%%%%%%%%%%%%%%%%%%%%%%%%%%%%%%%%%%%%%%%%%%%%%%%%%
  
\section{Crossed products}

%%%%%%%%%%%%%%%%%%%%%%%%%%%%%%%%%%%%%%%%%%%%%%%%%%%%%%%%%%%%%%%%%%%%%%%%%%%%%%%%%%%%%%%%%%%%%%%%%%%%%%%%%%%%%%%%%%%%%%%%%%%%%%%%%%%%%%%%%%%%%%%%%%%%%%%%%%%%%%%%%%%%%%%%%%%%%%%%%%%%%%%%%%%%%%%%%%%%%%%%%%%%%%%%%%%%%%%%%%%%%%%%%%%%%%%%%%%%%

Let $G$ be a locally compact group
and $\al$ an action of $G$ on a von Neumann algebra $M$.
Our main result in this section is the following.

\bthm\label{thm:cross}
If $M\rti_\al G$ has the HAP, then so does $M$.
\ethm

To prove this,
we may and do assume that $M$ is properly infinite
by studying the tensor product $\bB(\ell_2)\oti M$
and the action $\id\oti\al$.
Let $\be$ be the bidual action of $\al$
on $M\otimes \bB(L^2(G))$.
Then $\be$ has the invariant weight
and $\be$ is cocycle conjugate to $\al\otimes\id$.
%Thanks to \cite[Lemma 5.10]{st},
Thus we may and do assume that
there exists a weight $\vph$ on $M$
such that $\vph\circ\al_t=\vph$
for all $t\in G$.
%Note that \cite[Lemma 5.10]{st} is concerned with
%discrete amenable groups,
%but the proof works for arbitrary locally compact
%groups.

Let $N:=M\rti_\al G$ be the von Neumann algebra
generated by
the copy of $M$, $\pi_\al(M)$,
and the copy of $G$, $\la^\al(G)$
as defined below:
\[
(\pi_\al(x)\xi)(s)=\al_{s^{-1}}(x)\xi(s),
\quad
(\la^\al(t)\xi)(s)=\xi(t^{-1}s)
\]
for
$x\in M$, $s,t\in G$
and
$\xi\in H_\vph\oti L^2(G)$.

Let $\hvph$ be the dual weight of $\vph$.
Then for all $x\in n_\vph$
and $f\in C_c(G)$,
we obtain
\[
\hvph((\la^\al(f)\pi_\al(x))^*\la^\al(f)\pi_\al(x))
=
\vph(x^*x)\int_G |f(t)|^2\,dt.
\]
Hence $a:=\la^\al(f)\pi_\al(x)\in n_{\hvph}$ and
$\|\La_\hvph(a)\|=\|\La_\vph(x)\|_\vph\|f\|_2$.
Actually, it is known that
there exists a surjective isometry
from $H_{\hvph}$ onto $H_\vph\oti L^2(G)$
which maps
$\La_\hvph(a)$ to $\La_\vph(x)\oti f$.
Thus we will regard $H_\hvph=H_\vph\oti L^2(G)$
and
\[
\La_\hvph(\la^\al(f)\pi_\al(x))=\La_\vph(x)\oti f
\quad
\mbox{for }
x\in n_\vph,\ f\in L^2(G).
\]

Note that $\vph$ is $\al$-invariant,
and
$\la^\al(t)$ is fixed by $\si^\hvph$,
that is,
$\C\oti L(G)=\{\la^\al(t)\mid t\in G\}''$
is contained in the centralizer
$N_\hvph$.
The following formulae are frequently used:
\[
\si_t^\hvph(\pi_\al(x))
=
\pi_\al(\si_t^\vph(x)),
\quad
\si_t^\hvph(\la^\al(f))
=
\la^\al(f).
\]
for all
$t\in\R$, $x\in M$ and $f\in L^1(G)$.

Denote by $\De_G$ the modular function of $G$.
In the following, $dt$ denotes a left invariant Haar measure on $G$.
Then $L^1(G)$ is a Banach $*$-algebra
equipped
with the convolution product and the involution defined as follows:
\[
(f*g)(t):=\int_G f(s)g(s^{-1}t)\,ds,
\quad
f^*(t):=\De_G(t^{-1})\ovl{f(t^{-1})}
\]
for $f,g\in L^1(G)$ and $t\in G$.
We further recall the following useful formulae:
\[
d(st)=dt,\quad d(ts)=\De_G(s)dt,\quad d(t^{-1})=\De_G(t^{-1})dt.
\]

%\subsection{CCP maps}

For $g\in C_c(G)$,
let us introduce the following map
$R_g\col H_\vph\to H_\hvph$
satisfying
\[
R_g\La_\vph(x):=\La_\hvph(\la^\al(g)\pi_\al(x)\la^\al(g)^*)
\quad
\mbox{for }
x\in n_\vph.
\]
This map is bounded since
\[
\La_\hvph(\la^\al(g)\pi_\al(x)\la^\al(g)^*)
=
J_\hvph\la^\al(g)J_\hvph
\La_\hvph(\la^\al(g)\pi_\al(x))
=
J_\hvph\la^\al(g)J_\hvph (\La_\vph(x)\oti g),
\]
and $\|R_g\|\leq \|g\|_1\|g\|_2$.
We will improve this estimate as follows.

\blem\label{lem:Rg}
Let $g\in C_c(G)$.
Then the following statements hold:
\benu
\item
$R_g$ is a c.p.\ operator;

\item
$\|R_g\|\leq\|\De_G^{-1/2}\cdot (g^* * g)\|_2$.
\eenu
\elem
\begin{proof}
(1).
Let $x\in m_\vph$ be an entire element with respect to $\si^\vph$.
Then $x J_\vph \La_\vph(x)=\La_\vph(x\si_{i/2}^\vph(x)^*)$,
and
\begin{align*}
R_gx J_\vph \La_\vph(x)
&=
R_g\La_\vph(x\si_{i/2}^\vph(x)^*)
\\
&=
\La_\hvph(\la^\al(g)\pi_\al(x\si_{i/2}^\vph(x)^*)\la^\al(g)^*)
\\
&=
\La_\hvph(\la^\al(g)\pi_\al(x)\cdot \si_{i/2}^\hvph(\la^\al(g)\pi_\al(x))^*)
\\
&=
\la^\al(g)\pi_\al(x)J_\hvph\La_\hvph(\la^\al(g)\pi_\al(x)),
\end{align*}
which belongs to $P_\hvph$.
Thus $R_gP_\vph\subs P_\hvph$.

Consider the action $\al\oti\id_n$ on $M\oti\mat$
for $n\geq1$.
Let $\widetilde{R}_g\col H_{\ps}\to H_{\hat{\ps}}$
be the map as defined above,
where $\ps:=\vph\oti\tr_n$.
We have proved that $\widetilde{R}_g$ is positive.
By the natural identification
$H_\ps=H_\vph\oti\mat$
and
$\hat{\ps}=\hvph\oti\tr_n$,
the map $\widetilde{R}_g=R_g\oti\id_n$
is positive.
Hence $R_g$ is $n$-positive for all $n$.

(2).
Let $x\in n_\vph$.
Then
\begin{align*}
\pi_\al(x)\la^\al(g)^*
&=
\pi_\al(x)\la^\al(g^*)
=
\pi_\al(x)\int_G g^*(t)\la^\al(t)\,dt
\\
&=
\int_G g^*(t)\la^\al(t)\pi_\al(\al_{t^{-1}}(x))\,dt.
\end{align*}
Since $\la^\al(g)\la^\al(t)=\De_G(t^{-1})\la^\al(g_{t^{-1}})$,
where $g_{t^{-1}}(s):=g(st^{-1})$,
we have
\[
\la^\al(g)\pi_\al(x)\la^\al(g)^*
=
\int_G \De_G(t^{-1})g^*(t)\la^\al(g_{t^{-1}})\pi_\al(\al_{t^{-1}}(x))
\,dt.
\]
Then
\begin{align*}
R_g\La_\vph(x)
&=
\int_G
\De_G(t^{-1})g^*(t)\,
\La_\vph(\al_{t^{-1}}(x))\oti g_{t^{-1}}
\,dt
\\
&=
\int_G
g^*(t^{-1})\,
\La_\vph(\al_{t}(x))\oti g_{t}
\,dt.
\end{align*}

Hence for $y\in n_\vph$,
we obtain
\begin{align*}
\langle R_g\La_\vph(x),R_g\La_\vph(y)\rangle
&=
\int_{G\times G}
g^*(t^{-1})\ovl{g^*(s^{-1})}\,
\langle
\La_\vph(\al_{t}(x))\oti g_{t},
\La_\vph(\al_{s}(y))\oti g_{s}
\rangle
\,dsdt
\\
&=
\int_{G\times G}
g^*(t^{-1})\ovl{g^*(s^{-1})}\,
\vph(y^*\al_{s^{-1}t}(x))
\langle g_{s^{-1}t},g\rangle
\,dsdt
\\
&=
\int_{G\times G}
g^*(t^{-1}s^{-1})\ovl{g^*(s^{-1})}\,
\vph(y^*\al_{t}(x))
\langle g_{t},g\rangle
\,dsdt.
\end{align*}
Since
\begin{align*}
\int_G
g^*(t^{-1}s^{-1})\ovl{g^*(s^{-1})}\,ds
&=
\int_G
g^*(t^{-1}s)\ovl{g^*(s)}
\De_G(s^{-1})\,ds
\\
&=
\int_G
\De_G(t^{-1})
\cdot
\De_G(t^{-1}s)^{-1}
g^*(t^{-1}s)\ovl{g^*(s)}
\,ds
\\
&=
\int_G
\De_G(t^{-1})
\cdot
\ovl{(g^*)^*(s^{-1}t)}\ovl{g^*(s)}
\,ds
\\
&=
\De_G(t^{-1})
\ovl{(g^**g)(t)},
\end{align*}
and
$\langle g_t,g\rangle
=(g^* *g)(t)$,
we have
\[
\langle R_g\La_\vph(x),R_g\La_\vph(y)\rangle
=
\int_G \De_G(t^{-1})|g^**g(t)|^2\vph(y^*\al_t(x))\,dt.
\]
This implies that
\begin{equation}
\label{eq:Rgstar}
R_g^*R_g\La_\vph(x)
=
\int_G \De_G(t^{-1})|g^**g(t)|^2\,\La_\vph(\al_t(x))\,dt,
\end{equation}
and
\[
\|R_g^*R_g\|
\leq
\int_G \De_G(t^{-1})|g^**g(t)|^2\,dt
=\|\De_G^{-1/2}\cdot (g^**g)\|_2^2.
\]
\end{proof}

\brem
If there exists a non-zero $x\in n_\vph\cap M^\al$,
then the equality (\ref{eq:Rgstar}) implies
$\|R_g\|=\|\De_G^{-1/2}\cdot (g^**g)\|_2$.
\erem

Now let $\cU$ be the collection
of all compact neighborhoods of the neutral element $e\in G$.
We will equip $\cU$ with the structure of the directed set
as $U\leq V$ if and only if $V\subs U$
for $U,V\in\cU$.

For each $U\in\cU$,
take a non-zero $g_U\in C_c(G)$ such that $\supp g_U\subs U$.
Now let
\[
k_U(t)
:=\|\De_G^{-1/2}\cdot (g_U^**g_U)\|_2^{-2}\De_G(t^{-1})|(g_U^**g_U)(t)|^2
\quad
\mbox{for }
t\in G.
\]
Note that $g_U^**g_U$ is non-zero since so is $g_U$.

The following lemma is a direct consequence of the definition.

\blem
\label{lem:k_U}
The function $k_U$ has the following properties:
\begin{itemize}
\item
$k_U(t)\geq 0$ for all $t\in G$;
\item
$\supp k_U\subs U^{-1}U$;
\item
$\int_G k_U(t)\,dt=1$.
\end{itemize}
In particular,
it follows for any continuous function $f$ on $G$ that
\[
\lim_{U}\int_G k_U(t)f(t)\,dt=f(e).
\]
\elem

\blem
\label{lem:S_U}
Let $R_{g_U}$ be as before.
Then the following statements hold:
\benu
\item
The operator
$S_U:=\|\De_G^{-1/2}\cdot (g_U^**g_U)\|_2^{-1}R_{g_U}$
is a c.c.p.\ operator from $H_\vph$ into $H_\hvph$;

\item
$S_U^*S_U\to1_{H_\varphi}$ in the strong topology of $\bB(H_\vph)$.
\eenu
\elem

\begin{proof}
(1).
It is clear from Lemma \ref{lem:Rg}
that $S_U$ is a c.c.p.\ operator. 

(2).
Let $x\in n_\vph$.
By (\ref{eq:Rgstar}),
we have
\[
\|S_U^*S_U\La_\vph(x)-\La_\vph(x)\|
\leq
\int_G k_U(t)\|\La_\vph(\al_t(x))-\La_\vph(x)\|\,dt.
\]
Applying Lemma \ref{lem:k_U} to
$f(t):=\|\La_\vph(\al_t(x))-\La_\vph(x)\|$,
we are done.
\end{proof}

Now we will present a proof of Theorem \ref{thm:cross}.

\begin{proof}[Proof of Theorem \ref{thm:cross}]
Let $\cF$ be the collection of all finite sets
contained in $n_\vph$.
It is trivial that $\{\La_\vph(x)\mid x\in F\}_{F\in\cF}$
forms a net of finite sets in $H_\vph$ such that
their union through $F\in\cF$ is dense in $H_\vph$.

Let $F\in\cF$ be a non-empty set.
Employing Lemma \ref{lem:S_U},
we can take $U_F\in\cU$ so that
\begin{equation}
\label{eq:SS}
\|S_{U_F}^*S_{U_F}\La_\vph(x)-\La_\vph(x)\|
<\frac{1}{|F|}
\quad \text{for}\ x\in F.
\end{equation}

Next, let $T_\gamma$ be a net of c.c.p.\ compact operators
on $H_\hvph$ such that $T_\gamma\to1$ in the strong topology of $\bB(H_\hvph)$.
Then we can find $\gamma_F$ such that
\begin{equation}
\label{eq:STS}
\|T_{\gamma_F}S_{U_F}\La_\vph(x)-S_{U_F}\La_\vph(x)\|
<\frac{1}{|F|}
\quad \text{for}\ x\in F.
\end{equation}

Now put $\widetilde{T}_F:=S_{U_F}^*T_{\gamma_F}S_{U_F}$.
Then $\widetilde{T}_F$ is a c.c.p. compact operator on $H_\vph$,
and by (\ref{eq:SS}) and (\ref{eq:STS}),
we have
\[
\|\widetilde{T}_F\La_\vph(x)-\La_\vph(x)\|
<\frac{2}{|F|}
\quad
\mbox{for all }
x,y\in F,\ F\in\cF.
\]
This implies that $\widetilde{T}_F\to1_{H_\varphi}$ in the strong topology. 
\end{proof}

\bcor\label{cor:abelian}
Let $G$ be a locally compact abelian group
and $\al$ an action on a von Neumann algebra.
Then
$M$ has the HAP
if and only if so does $M\rti_\al G$.
\ecor

\begin{proof}
The ``if'' part is nothing but Theorem \ref{thm:cross}.
Next we will prove the ``only if'' part.
Suppose that $M$ has the HAP.
Then so does $M\oti \bB(L^2(G))$ 
by Corollary \label{cor:tor} and Theorem \ref{tensor}.
The Takesaki duality states that
$M\oti \bB(L^2(G))$ is isomorphic to
$(M\rti_\al G)\rti_\hal \hat{G}$.
Hence $M\rti_\al G$ has the HAP by Theorem \ref{thm:cross}.
\end{proof}

It is well-known that the crossed product $M\rti_{\si^\vph}\R$
does not depend on the choice of an f.n.s.\ weight $\vph$.
So, we denote it by $\tM$ and call it the \emph{core} of $M$.
The reader is referred to \cite{ft}, \cite{t2} for the cores.

\bcor\label{cor:core}
Let $M$ be a von Neumann algebra
and $\tM$ the core.
Then
$M$ has the HAP if and only if so does $\tM$.
\ecor

%%%%%%%%%%%%%%%%%%%%%%%%%%%%%%%%%%%%%%%%%%%%%%%%%%%%%%%%%%%%%%%%%%%%%%%%%%%%%%%%%%%%%%%%%%%%%%%%%%%%%%%%%%%%%%%%%%%%%%%%%%%%%%%%%%%%%%%%%%%%%%%%%%%%%%%%%%%%%%%%%%%%%%%%%%%%%%%%%%%%%%%%%%%%%%%%%%%%%%%%%%%%%%%%%%%%%%%%%%%%%%%%%%%%%%%%%%%%%

\brem\label{rem:CS}
M. Caspers and A. Skalski independently
introduced the notion of the Haagerup approximation property
for arbitrary von Neumann algebras
in their setting.
One may wonder whether two definitions differ or not.
Actually, these formulations are equivalent as shown below
though we give an indirect proof using cores.
In either way,
a von Neumann algebra has the HAP if and only if so does its core.
(See \cite[Corollary 5.10, Theorem 6.6]{CS}.)
Thus we may and do assume that
$M$ is finite or of type II$_\infty$.
If $M$ is of type II$_\infty$,
then $M$ has a finite projection $e$ with central support $1$.
Considering the corner $eMe$,
we may and do assume that $M$ is finite.
(See \cite[Lemma 4.1, Proposition 5.9]{CS}.)
Then it is fairly trivial that our definition coincides
with \cite[Definition 3.1]{CS} for a faithful normal tracial state
by Theorem \ref{sigma}.
\erem

%%%%%%%%%%%%%%%%%%%%%%%%%%%%%%%%%%%%%%%%%%%%%%%%%%%%%%%%%%%%%%%%%%%%%%%%%%%%%%%%%%%%%%%%%%%%%%%%%%%%%%%%%%%%%%%%%%%%%%%%%%%%%%%%%%%%%%%%%%%%%%%%%%%%%%%%%%%%%%%%%%%%%%%%%%%%%%%%%%%%%%%%%%%%%%%%%%%%%%%%%%%%%%%%%%%%%%%%%%%%%%%%%%%%%%%%%%%%%

As an application of Corollary \ref{cor:core},
we will prove the following result which generalizes
Theorem \ref{thm:expectation}.

\bthm
\label{thm:norm1proj}
Let $N\subs M$ be an inclusion of von Neumann algebras.
Suppose that there exists a norm one projection from $M$ onto $N$.
If $M$ has the HAP, then so does $N$.
\ethm

To prove this,
we may assume that $N$ and $M$ are properly infinite
by considering $N\oti \bB(\ell^2)\subs M\oti \bB(\ell^2)$ if necessary.
Let $\tM$ be the core of $M$, which has the HAP by Corollary \ref{cor:core}.
Note that there exists a norm one projection from $\tM$ onto $M$
by averaging the dual action on $\tM$.
Thus we may assume that $M$ is semifinite.
%Next, let $\vph$ be a dominant weight of $N$.
Let $N=Q\rti_\th\R$ be a continuous decomposition of $N$
for some $\R$-action $\th$ on a semifinite von Neumann algebra $Q$.
By Corollary \ref{cor:abelian},
it suffices to prove that $Q$ has the HAP.

Therefore we may assume that $N$ and $M$ are semifinite.
Let $p\in N$ be a finite projection with central support 1 in $N$.
By Corollary \ref{lem:central1}, our task is reduced to prove that
$pNp$ has the HAP.
So, we may assume that $N$ is finite and also $\si$-finite
by usual reduction argument with Proposition \ref{increasing}.

In the following discussion,
$\ta_N$ and $\ta_M$ denote a faithful normal tracial state on $N$
and a f.n.s.\ tracial weight on $M$,
respectively.
Thanks to \cite[Theorem 5.1]{haa3},
there exists a unique f.n.s.\ operator valued weight
$T$ from $M$ onto $N$ such that $\ta_M=\ta_N\circ T$.

Recall the following lemma \cite[Lemma 3.7]{ana} originally due to Connes
(See \cite[p.102]{co2}).

\blem\label{lem:A-D}
Let $N$ and $M$ be as in Theorem \ref{thm:norm1proj}.
Then for any $\delta>0$ and a finite subset $F\subset N$,
there exists a normal state $\vph$ on $M$
such that
\begin{equation}
\label{eq:vphtaNleft}
\|\vph|_N-\ta_N\|_{N_*}<\delta,
\end{equation}
\begin{equation}
\label{eq:vphtaNright}
\|a\vph-\vph a\|_{M_*}<\delta
\quad
\mbox{for all }a\in F.
\end{equation}
\elem

In the following, we will use the notations
\[
|x|_{\ta_M}:=\ta_M(|x|),
\quad
\|x\|_{\ta_M}:=\ta_M(x^*x)^{1/2}
\quad
\mbox{for }x\in M.
\]
We prepare the notations $|\cdot|_{\ta_N}$
and $\|\cdot\|_{\ta_N}$ as well.
The important fact is that they satisfy
the triangle inequality by the tracial property.
%for all elements whose trace values are finite.

\blem
\label{lem:norm1vep}
Let $N$ and $M$ be as in Theorem \ref{thm:norm1proj}.
Then for any $\vep>0$ and a finite subset $F\subset N$,
there exists $b\in n_{\ta_M}\cap M^+$ and a projection $e\in N$
such that
\begin{itemize}
\item
$\ta_M(b^2)\leq1$;

\item 
$(1-\vep)e\leq T(b^2)\leq (1+\vep)e$;

\item
$\ta_N(1-e)<\vep$;

\item
$\|\La_{\ta_M}(ab)-\La_{\ta_M}(ba)\|<\vep$
for all $a\in F$.
\end{itemize}
\elem

\begin{proof}
We may and do assume that $F$ consists of unitary operators.
Let us take $1\geq\de>0$ small enough so that
$10\de^{1/4}<\vep^2$,
$1-\vep<(1-\de^{1/4})^2$,
%$2\de^{1/2}<\vep$
and $(1+\de^{1/4})^2<1+\vep$.
Applying Lemma \ref{lem:A-D} to $\de$ and $F$,
we obtain a state $\vph\in M_*$ satisfying (\ref{eq:vphtaNleft})
and (\ref{eq:vphtaNright}).
Take the unique vector $\xi\in P_M$ such that $\vph=\om_\xi$,
where $P_M$ denotes the natural cone of $M$
realized in the GNS Hilbert space $H_{\ta_M}$.
We may and do assume that $\xi=\La_{\ta_M}(b)$
for some positive $b\in n_{\ta_M}$.
Then we have
\[
\vph(x)=\ta_M(bxb)=\ta_M(b^2x)=\ta_N(T(b^2)x)
\quad
\mbox{for }x\in N.
\]
In particular, 
$1=\vph(1)=\ta_N(T(b^2))$,
and thus 
$h:=T(b^2)$ is an operator in $L^1(N,\ta_N)_+$,
where $L^1(N,\ta_N)_+$ denotes the positive operator
in $L^1(N,\ta_N)$, the non-commutative $L^1$-space
with respect to the finite von Neumann algebra $\{N,\ta_N\}$.
The $L^1$-norm is denoted by $|\cdot|_{\ta_N}$.
The $L^2$-space of $\{N,\ta_N\}$ and the $L^2$-norm
are denoted by $L^2(N,\ta_N)$ and $\|\cdot\|_{\ta_N}$
as well.
For more details about the non-commutative $L^p$-space 
with respect to a faithful normal semifinite tracial weight, 
the reader may refer to \cite[IX.2]{t2}. 
%Let us denote $h:=T(b^2)\in N^+$.
Then (\ref{eq:vphtaNleft}) 
implies
\begin{equation}
\label{eq:h-1}
|h-1|_{\ta_N}<\de.
\end{equation}
Applying the Araki--Powers--St\o rmer inequality to
(\ref{eq:vphtaNright}),
we have
\begin{equation}
\label{eq:ubuvph}
\|\La_{\ta_M}(ubu^*)-\La_{\ta_M}(b)\|
\leq
\|u\vph u^*-\vph\|^{1/2}
<\de^{1/2}.
\end{equation}
Thus our task is
to arrange the operator norm of $h$.
Using the Araki--Powers--St\o rmer inequality,
we have
\begin{equation}
\label{eq:h1}
\|h^{1/2}-1\|_{\ta_N}^2
\leq
|h-1|_{\ta_N}
<\de
\quad
\mbox{by }(\ref{eq:h-1}).
\end{equation}

Let $h=\int_0^{\infty}\la\,de(\la)$
be the spectral decomposition.
Set
\[
\al_\de:=(1-\de^{1/4})^2,
\quad
\be_\de:=(1+\de^{1/4})^2.
\]
We put 
\[
e_1:=e([0,\al_\de)),
\quad
e_2:=e((\be_\de,\infty]).
\]
Then it follows from (\ref{eq:h1}) that
\[
\de^{1/2}\ta_N(e_1)
\leq
\int_{[0,\al_\de)}
|\la^{1/2}-1|^2\,d\ta(e(\la))
\leq
\|h^{1/2}-1\|_{\ta_N}^2
<\de,
\]
and
\[
\de^{1/2}\ta_N(e_2)
\leq
\int_{(\be_\de,\infty]}
|\la^{1/2}-1|^2\,d\ta(e(\la))
\leq
\|h^{1/2}-1\|_{\ta_N}^2
<\de.
\]
Thus
\begin{equation}
\label{eq:spectrum}
|e_1|_{\ta_N}=\ta_N(e_1)<\de^{1/2},
\quad
|e_2|_{\ta_N}=\ta_N(e_2)<\de^{1/2}.
%\quad
%\ta_N(e([\al_\de,\be_\de]))>1-2\de^{1/2}.
\end{equation}

Put $e:=e([\al_\de,\be_\de])\in N$
and $b':=(eb^2e)^{1/2}\in M$.
Then 
\[
\tau_N(1-e)=\tau_N(e_1)+\tau_N(e_2)<2\delta^{1/2}<\e
\]
and
\[
\tau_M(b'^2)=\ta_M(eb^2e)=\varphi(e)\leq 1.
\]
Moreover,
\[
T(b'^2)=T(eb^2e)=eT(b^2)e=ehe\leq \be_\de e
\leq (1+\vep)e,
\]
and, similarly, $(1-\vep)e\leq T(b'^2)$.

Next we have
\begin{align*}
|T(b'^2)-1|_{\ta_N}
&=
|ehe-1|_{\ta_N}
\leq
|e(h-1)e|_{\ta_N}
+
|e-1|_{\ta_N}
\\
&\leq
|h-1|_{\ta_N}
+
|e-1|_{\ta_N}
\\
&<
\de+2\de^{1/2}<3\de^{1/2}
\quad
\mbox{by }
(\ref{eq:h-1}), (\ref{eq:spectrum}).
\end{align*}
Let $(1-e)b^2=v|(1-e)b^2|$ be the polar decomposition
with a partial isometry $v$ in $M$.
Since
\begin{align*}
|(1-e)h|_{\ta_N}
\notag
&\leq
|(1-e)(h-1)|_{\ta_N}
+
|1-e|_{\ta_N}
\notag\\
&<
\de+2\de^{1/2}<3\de^{1/2},
\end{align*}
we have
\begin{align}
|b^2(1-e)|_{\ta_M}
&=
|(1-e)b^2|_{\ta_M}
\notag\\
&=
\ta_M(v^*(1-e)b^2)
=
\ta_M(bv^*(1-e)b)
\notag\\
&\leq
\ta_M(bv^*vb)^{1/2}\ta_M(b(1-e)b)^{1/2}
\notag\\
&\leq
\ta_M(b^2)^{1/2}\ta_M((1-e)b^2)^{1/2}
\notag\\
&=
\ta_N((1-e)h)^{1/2}
\notag\\
&=|(1-e)h|_{\ta_N}^{1/2}
\notag\\
&<\sqrt{3}\de^{1/4}.
\label{eq:b2e}
\end{align}
Hence
\begin{equation}
\label{eq:eb2e}
|b^2-eb^2e|_{\ta_M}
\leq
|(1-e)b^2|_{\ta_M}
+
|eb^2(1-e)|_{\ta_M}
<2\sqrt{3}\de^{1/4}.
\end{equation}
Then for $u\in F$, we have
\begin{align*}
\|\La_{\ta_M}(ub')-\La_{\ta_M}(b'u)\|^2
&=
\|\La_{\ta_M}(ub'u^*)-\La_{\ta_M}(b')\|^2
\\
&\leq
|ub'^2u^*-b'^2|_{\ta_M}
\\
&\leq
|u(e-1)b^2eu^*|_{\ta_M}
+
|ub^2(e-1)u^*|_{\ta_M}
\\
&\quad
+
|ub^2u^*-b^2|_{\ta_M}
+
|b^2-eb^2e|_{\ta_M}
\\
&\leq
4\sqrt{3}\de^{1/4}+
|ub^2u^*-b^2|_{\ta_M}
\quad
\mbox{by }(\ref{eq:b2e}),\ (\ref{eq:eb2e}).
\end{align*}
In the above, the second inequality follows from
the Araki--Powers--St\o rmer inequality.
Using again the Araki--Powers--St\o rmer inequality
and (\ref{eq:ubuvph}),
we obtain
\begin{align*}
\|\La_{\ta_M}(ub')-\La_{\ta_M}(b'u)\|^2
&\leq
4\sqrt{3}\de^{1/4}
+
2\|\La_{\ta_M}(ubu^*)-\La_{\ta_M}(b)\|_{\ta_M}
\\
&<
4\sqrt{3}\de^{1/4}+2\de^{1/2}
<10\de^{1/4}<\vep^2.
\end{align*}
Therefore, $b'$ does the job.
\end{proof}

Let $b$ be as in Lemma \ref{lem:norm1vep}.
We will introduce an operator $R_b\col H_{\ta_N}\to H_{\ta_M}$
defined by
\[
R_b (x\xi_{\ta_N})
:=
\La_{\ta_M}(b^{1/2}xb^{1/2})
\quad
\mbox{for }x\in N.
\]
It turns out that $R_b$ is an well-defined bounded operator
in what follows.
Let $x,y\in N$.
Then
\begin{align*}
\langle R_b (x\xi_{\ta_N}), R_b (y\xi_{\ta_N})\rangle
&=
\langle \La_{\ta_M}(b^{1/2}xb^{1/2}),\La_{\ta_M}(b^{1/2}yb^{1/2})\rangle
\\
&=
\ta_M(b^{1/2}y^*bxb^{1/2})
=
\ta_M(by^*bx)
\\
&=
\langle\La_{\ta_M}(bx),\La_{\ta_M}(yb)\rangle.
\end{align*}
We have
\begin{align*}
\|\La_{\ta_M}(bx)\|^2
&=
\ta_M(x^*b^2x)
=
\ta_N(x^*T(b^2)x)
\\
&\leq
(1+\vep)
\ta_N(x^*x).
\end{align*}
Thus
$\|\La_{\ta_M}(bx)\|\leq(1+\vep)^{1/2}\|x\xi_{\ta_N}\|$.
Similarly,
$\|\La_{\ta_M}(yb)\|\leq (1+\vep)^{1/2}\|y\xi_{\ta_N}\|$.
Hence
\[
|\langle R_b (x\xi_{\ta_N}), R_b (y\xi_{\ta_N})\rangle|
\leq
(1+\vep)\|x\xi_{\ta_N}\|\|y\xi_{\ta_N}\|.
\]
This shows that $\|R_b\|\leq(1+\vep)^{1/2}$.

\blem\label{lem:R_b}
Let $\vep>0$ and $F\subs N$ be as before.
Take $b$ and $e$ as in Lemma \ref{lem:norm1vep}.
Let $R_b$ be the associated operator defined above.
Then the following statements hold:
\benu
\item
$R_b$ is a c.p.\ operator from $H_{\ta_N}$
into $H_{\ta_M}$;

\item
One has
\[
|\langle R_b^*R_b (x\xi_{\ta_N}), y\xi_{\ta_N}\rangle
-
\langle x\xi_{\ta_N}, y\xi_{\ta_N}\rangle|
<\vep\|y\|+2\vep\|x\|\|y\|
\]
for all $x\in F$ and $y\in N$.
\eenu
\elem

\begin{proof}
(1).
This is trivial.

(2).
Since
\[
\|\La_{\ta_M}(yb)\|
=\ta_M(by^*yb)^{1/2}
\leq\|y\|\ta_M(b^2)^{1/2}
\leq\|y\|,
\]
we have
\begin{align*}
|\langle R_b^*R_b (x\xi_{\ta_N}), y\xi_{\ta_N}\rangle
-
\langle \La_{\ta_M}(xb),\La_{\ta_M}(yb)\rangle|
&=
|\langle \La_{\ta_M}(bx)-\La_{\ta_M}(xb),\La_{\ta_M}(yb)\rangle|
\\
&\leq
\|\La_{\ta_M}(bx)-\La_{\ta_M}(xb)\|\|\La_{\ta_M}(yb)\|
\\
&\leq
\vep\|y\|,
\end{align*}
and
\begin{align*}
&|\langle \La_{\ta_M}(xb),\La_{\ta_M}(yb)\rangle
-\langle x\xi_{\ta_N},y\xi_{\ta_N}\rangle|
\\
&=
|\ta_M(by^*xb)-\ta_N(y^*x)|
=
|\ta_M(y^*xb^2)-\ta_N(y^*x)|
\\
&=
|\ta_N(y^*x(T(b^2)-1))|
\\
&\leq
|\ta_N(y^*x(T(b^2)-e))|
+
|\ta_N(y^*x(e-1))|
\\
&\leq
\|x\|\|y\|\|T(b^2)-e\|
+
\|x\|\|y\|\ta_N(1-e)
\\
&\leq
2\vep\|x\|\|y\|.
\end{align*}
Hence we are done.
\end{proof}

\begin{proof}[Proof of Theorem \ref{thm:norm1proj}]
We have assumed that $N$ is finite and $M$ is semifinite.
Let $\ta_N,\ta_M$ and $T$ be as before.
Our proof presented here is similar to that of Theorem \ref{thm:cross}.

Let $\cF$ be the collection of all finite subsets 
in the norm unit ball of $N$.
Then $\cF$ is a directed set as before.
Applying Lemma \ref{lem:norm1vep}
and Lemma \ref{lem:R_b} to $\vep$ and $F\in \cF$,
we obtain $b(\vep,F)\in n_{\ta_M}\cap M_+$ such that
\[
|\langle R_{b(\vep,F)}^*R_{b(\vep,F)}
(x\xi_{\ta_N}), y\xi_{\ta_N}\rangle
-
\langle x\xi_{\ta_N},y\xi_{\ta_N}\rangle|
<3\vep
\quad
\mbox{for all }x,y\in F.
\]

Since $M$ has the HAP,
there exists a c.c.p.\ compact operators $T_{(\vep,F)}$ on $H_{\ta_M}$
such that 
\[
|\langle R_{b(\vep,F)}^*T_{(\vep,F)}R_{b(\vep,F)}
(x\xi_{\ta_N}), y\xi_{\ta_N}\rangle
-
\langle R_{b(\vep,F)}^*R_{b(\vep,F)}
(x\xi_{\ta_N}), y\xi_{\ta_N}\rangle
|
<\vep
\]
for all $x,y\in F$.
If we set $U_{(\vep,F)}:=R_{b(\vep,F)}^*T_{(\vep,F)}R_{b(\vep,F)}$,
then $U_{(\vep,F)}$ is a c.p.\ compact operator
on $H_{\ta_N}$, because $T_{(\vep,F)}$ is compact.
It turns out that $U_{(\vep,F)}$ converges to 1 weakly
from the fact that
$\|U_{(\vep,F)}\|\leq\|R_{b(\vep,F)}\|^2\leq1+\vep$
and
\[
|\langle U_{(\vep,F)} (x\xi_{\ta_N}), y\xi_{\ta_N}\rangle
-\langle x\xi_{\ta_N},y\xi_{\ta_N}\rangle|
<4\vep
\quad
\mbox{for all }x,y\in F.
\]
Then the net $(1+\vep)^{-1}U_{(\vep,F)}$ does the job.
\end{proof}

Let $G$ be a locally compact quantum group
in the sense of \cite{kv}.
Roughly speaking, $G$ consists of
a von Neumann algebra $L^\infty(G)$
and
a coproduct
$\De\col L^\infty(G)\to L^\infty(G)\oti  L^\infty(G)$.
Then $G$ is said to be \emph{amenable}
if there exists a state $m$ on $L^\infty(G)$,
which is called an invariant mean on $G$,
such that $(\id\oti m)\circ\De(x)=m(x)$.

Let $\al$ be an action of $G$ on a von Neumann algebra.
Namely, $\al$ is a unital faithful normal
$*$-homomorphism from $M$ into $M\oti L^\infty(G)$
such that
$(\al\oti\id)\circ\al=(\id\oti\De)\circ\al$.
If $m$ is an invariant mean on $G$,
then the map $(\id\oti m)\circ\al$
is a norm one projection from $M$ onto
$M^\al:=\{x\in M\mid \al(x)=x\oti1\}$,
the fixed point algebra.
Thus the following result is an immediate consequence
of Theorem \ref{thm:norm1proj}.

\bcor
Let $G$ be an amenable locally compact quantum group.
Let $\al$ be an action of $G$ on a von Neumann algebra $M$.
If $M$ has the HAP,
then the fixed point algebra $M^\al$ has the HAP.
\ecor

By the duality argument,
we can generalize Theorem \ref{thm:cross}
as follows.

\bcor
Let $G$ be a locally compact quantum group
whose dual quantum group is amenable.
Let $\al$ be an action of $G$ on a von Neumann
algebra $M$.
If $M\rti_\al G$ has the HAP,
then so does $M$.
\ecor

Finally, we present a generalization of
Corollary \ref{cor:abelian}
that is obtained from the previous corollary
and the fact that $M\rti_\al G$
equals the fixed point algebra of $M\oti \bB(L^2(G))$
by a $G$-action.

\bcor
Let $G$ be an amenable locally compact quantum group
whose dual quantum group is also amenable.
Let $\al$ be an action of $G$ on a von Neumann
algebra $M$.
Then
$M\rti_\al G$ has the HAP
if and only if so does $M$.
\ecor

\begin{remark}
If we apply the same proof of Theorem \ref{thm:norm1proj}
to the inclusion
$N\subs M$
such that $M$ is semidiscrete,
then we can show that
$N$ is semidiscrete.
In particular,
this gives a proof of the fact that
the injectivity implies the semidiscreteness.
Indeed,
let $M$ be an injective von Neumann algebra
which is acting on a Hilbert space $H$.
Then we have a norm one projection $E$ from $\bB(H)$
onto $M$.
Since $\bB(H)$ is semidiscrete,
$M$ is semidiscrete.
\end{remark}

%%%%%%%%%%%%%%%%%%%%%%%%%%%%%%%%%%%%%%%%%%%%%%%%%%%%%%%%%%%%%%%%%%%%%%%%%%%%%%%%%%%%%%%%%%%%%%%%%%%%%%%%%%%%%%%%%%%%%%%%%%%%%%%%%%%%%%%%%%%%%%%%%%%%%%%%%%%%%%%%%%%%%%%%%%%%%%%%%%%%%%%%%%%%%%%%%%%%%%%%%%%%%%%%%%%%%%%%%%%%%%%%%%%%%%%%%%%%%

\end{document}